\newtheorem{thm}{Theorem}[section]
\newtheorem{main}{Theorem}\renewcommand{\themain}{\Alph{main}}
\newtheorem{prop}[thm]{Proposition}
\theoremstyle{definition}\newtheorem{defn}[thm]{Definition}
\newtheorem{exmp}[thm]{Example}\newtheorem{rem}[thm]{Remark}
\newtheorem{prob}[thm]{Problem}
\newcommand{\Z}{\mathbb{Z}}
\newcommand{\R}{\mathbb{R}}
\newcommand{\E}{\mathcal{E}}
\newcommand{\lc}{\textrm{lc}}
\newcommand{\rc}{\textrm{rc}}
\newcommand{\disc}{\mathbf{D}}
\newcommand{\vv}{\mathbf{v}}
\newcommand{\third}{\textrm{new}}
\newcommand{\sym}{\textsc{Sym}}\newcommand{\braid}{\textsc{Braid}}
\newcommand{\norm}{\textsc{Norm}}
\newcommand{\simp}{\textsc{Simp}}
\newcommand{\rot}{\textsc{Rot}}
\newcommand{\gen}{\textsc{Gen}}
\newcommand{\std}{\textsc{Std}}
\newcommand{\bi}{\begin{itemize}}\newcommand{\ei}{\end{itemize}}
\newcommand{\be}{\begin{enumerate}}\newcommand{\ee}{\end{enumerate}}
\newcommand{\bc}{\begin{center}}\newcommand{\ec}{\end{center}}
\newcommand{\bt}{\begin{tabular}}\newcommand{\et}{\end{tabular}}
\newcommand{\drawArrow}[2]{\draw[-triangle 60,thick,color=blue] #1--($#1!.85!#2$);}
\newcommand{\id}{1}
\tikzstyle{YellowPoly}=[thin,color=black,fill=yellow!20,join=bevel]
\tikzstyle{GreenPoly}=[thin,color=black,fill=green!20,join=bevel]
\tikzstyle{BlueLine}=[very thick,color=blue,join=bevel]
\tikzstyle{RedLine}=[very thick,color=red,join=bevel]
\tikzstyle{BluePoly}=[BlueLine,fill=blue!20]
\tikzstyle{RedPoly}=[RedLine,fill=red!20]
\tikzstyle{smallDot}=[draw,shape=circle,color=black,fill=black,inner sep=.75pt]
\tikzstyle{Rect}=[fill=yellow!20,rounded corners,minimum width=.9cm,minimum height=.9cm,draw]
\begin{document}

\title{Braid groups and euclidean simplices}
\author{Elizabeth Leyton Chisholm and Jon McCammond} 
\date{\today}

\subjclass[2010]{20F36,51M05,57M07} 
\keywords{braid groups, euclidean simplices, linear representations}

\begin{abstract}
When Daan Krammer and Stephen Bigelow independently proved that braid
groups are linear, they used the Lawrence-Krammer-Bigelow
representation for generic values of its variables $q$ and $t$.  The
$t$ variable is closely connected to the traditional Garside structure
of the braid group and plays a major role in Krammer's algebraic
proof.  The $q$ variable, associated with the dual Garside structure
of the braid group, has received less attention.

In this article we give a geometric interpretation of the $q$ portion
of the LKB representation in terms of an action of the braid group on
the space of non-degenerate euclidean simplices.  In our
interpretation, braid group elements act by systematically reshaping
(and relabeling) euclidean simplices.  The reshapings associated to
the simple elements in the dual Garside structure of the braid group
are of an especially elementary type that we call relabeling and
rescaling.
\end{abstract}
\maketitle

At the turn of the millenium three papers on the linearity of braid
groups appeared in rapid succession and all three used what is now
known as the Lawrence-Krammer-Bigelow or LKB representation
\cite{Kr00,Bi01,Kr02}. Its two variables, $q$ and $t$, are connected
to two different Garside structures on the braid group.  The $t$
variable is closely connected to the traditional Garside structure of
the braid group and plays a major role in Krammer's algebraic proof
\cite{Kr02}.  The $q$ variable is associated with the dual Garside
structure and has received less attention.  In this article, we
introduce an elegant geometric interpretation of the $q$ variable in
the special case where $t=1$, $q$ is real, and the matrices of the
representation are written with respect to the original basis used by
Krammer in \cite{Kr00}.  We call this special case the simplicial
representation because of our first main result.

\begin{main}[Braids reshape simplices]\label{main:braid}
  The simplicial representation of the $n$-string braid group
  preserves the set of $\binom{n}{2}$-tuples of positive reals that
  represent the squared edge lengths of a nondegenerate euclidean
  simplex with $n$ labeled vertices.
\end{main}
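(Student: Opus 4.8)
The plan is to verify the invariance on the standard generators $\sigma_1,\dots,\sigma_{n-1}$ of $B_n$ and then appeal to the group law. Write $V=\R^{\binom n2}$ for the space carrying the simplicial representation $\beta$, with coordinates $x_{ij}$ indexed by the $2$-element subsets $\{i,j\}\subseteq\{1,\dots,n\}$, and let $S\subseteq V$ be the subset described in the statement. A subset of $V$ carried into itself by each $\beta(\sigma_i)$ and each $\beta(\sigma_i)^{-1}$ is carried into itself by all of $\beta(B_n)$, so it suffices to treat one generator and its inverse at a time; the argument is uniform in $i$, and the case $n=2$, where $S$ is the positive half-line, is checked directly.

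The first step is to trade the metric description of $S$ for a positive-definiteness condition. Fix a vertex $k$ to serve as an apex and let $\Phi_k\colon V\to\mathrm{Sym}_{n-1}(\R)$ be the linear isomorphism onto the symmetric matrices with rows and columns indexed by $\{1,\dots,n\}\setminus\{k\}$ given by $\Phi_k(x)_{ij}=\tfrac{1}{2}\bigl(x_{ik}+x_{jk}-x_{ij}\bigr)$, with the conventions $x_{ii}=0$ and $x_{ji}=x_{ij}$. The classical theory of Euclidean distance matrices says that $x\in S$ precisely when $\Phi_k(x)$ is positive definite: positive definiteness exhibits $\Phi_k(x)$ as the Gram matrix of $n-1$ linearly independent displacement vectors $p_i-p_k$, which assemble into a nondegenerate labeled simplex with the prescribed squared edge lengths, and conversely each such simplex produces a positive definite $\Phi_k(x)$. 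This criterion does not depend on the choice of apex.

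The heart of the matter is to identify the generator action in these coordinates. Fixing $i$, choose an apex $k\notin\{i,i+1\}$, which is possible once $n\ge 3$. The content of the simplicial picture — the ``relabeling and rescaling'' of the abstract, here applied to the generator $\sigma_i$, which is one of the simple elements of the dual Garside structure — is that the conjugated map $\Phi_k\circ\beta(\sigma_i)\circ\Phi_k^{-1}$ on $\mathrm{Sym}_{n-1}(\R)$ is a congruence $M\mapsto A_i^{\mathsf T}MA_i$, where $A_i$ is an explicit real matrix equal to the identity outside a small block attached to $i$ and $i+1$, with entries that are polynomials in $\sqrt q$. Establishing this identity is a bounded block computation: starting from an actual simplex on vertices $p_1,\dots,p_n$, perform the corresponding reshaping — a relabeling of $p_i$ and $p_{i+1}$ together with a rescaling — recompute the squared edge lengths, and compare the outcome entry by entry with the matrix of $\beta(\sigma_i)$ in Krammer's basis (the agreement on the realized tuples, which span $V$, forcing it on all of $V$ by linearity). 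Along the way one checks that $A_i$ is invertible over $\R$, which is precisely where the hypothesis that $q$ is a positive real enters, so that $\sqrt q$ is real and $\det A_i\ne 0$; the inverse $A_i^{-1}$ realizes $\beta(\sigma_i)^{-1}$ in the same way.

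Granting this, the conclusion is immediate. By Sylvester's law of inertia a real congruence by an invertible matrix preserves positive-definiteness, so if $\Phi_k(x)$ is positive definite then so is $\Phi_k(\beta(\sigma_i)x)=A_i^{\mathsf T}\Phi_k(x)A_i$; hence $\beta(\sigma_i)$ carries $S$ into $S$, and $\beta(\sigma_i)^{-1}$ does so via $A_i^{-1}$. Ranging over $i$ and invoking the group law finishes the proof. I expect the main obstacle to be the third step: pinning down the correct apex-compatible reshaping for each generator and carrying out the edge-length bookkeeping so that it matches Krammer's matrices exactly. Once the action has been recognized as a real congruence the invariance of $S$ is essentially free, and I would stress that positivity of the squared edge lengths — awkward to read off the matrices directly — costs nothing in the geometric description, since there one produces a genuine Euclidean simplex.
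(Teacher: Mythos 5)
Your proposal is correct and reaches the same geometric heart as the paper (the generator $\sigma_i$ acts by a concrete reshaping of simplices), but it packages the invariance in a genuinely different way. The paper's route is to decompose $S_{ij}$ directly into a permutation matrix composed with an edge-rescaling matrix, $S_{ij}=P_{ij}R^{ij}_{\rc(ij)}=R^{ij}_{\lc(ij)}P_{ij}$ (Proposition~\ref{prop:std-gen}), and then observe that each factor is, by construction, the matrix of an operation (relabel a vertex; rescale a spanning tree and reassemble, Definition~\ref{def:edge-rescaling-maps}) that manufactures a genuine simplex from a genuine simplex. Your route instead passes to the Gram matrix $\Phi_k(x)$ at an apex $k\notin\{i,i+1\}$, identifies the conjugated action as a congruence $M\mapsto A_i^{\mathsf T}MA_i$, and invokes Sylvester's law of inertia. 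Both arguments must do the same finite bookkeeping (match the reshaping's effect on squared edge lengths against Krammer's basis entry by entry, exactly where you flag the main obstacle; the paper does this in Propositions~\ref{prop:triangle} and~\ref{prop:diagonal-edge} and Example~\ref{ex:boundary-edge}), so the effort is comparable; what yours buys is a self-contained linear-algebraic closure step (positive-definiteness under congruence), while the paper's buys a cleaner geometric picture and more, namely the full factorization of every dual simple braid as ``relabel and rescale,'' which is Theorem~\ref{main:simple}. One small slip: the matrix $A_i$ on displacement vectors has entries that are polynomials in $q$, not $\sqrt q$ --- the convention in the paper is that the \emph{lengths} of $\sigma$-edges scale by $q$ and the \emph{norms} by $q^2$, so the degree-two polynomials appear only in the edge-norm matrix $S_{ij}$, not in the displacement-level matrix; this does not affect the invertibility argument since $q>0$ real already keeps everything over $\R$.
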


Thus braid group elements can be viewed as acting on and
systematically reshaping the space of all nondegenerate euclidean
simplices.  Moreover, the dual simple braids in the braid group
reshape simplices in an extremely elementary way that we call
relabeling and rescaling.  In this language we prove the following
result; for a more precise statement, see Section~\ref{sec:matrices}.

\begin{main}[Dual simple braids relabel and rescale]\label{main:simple}
  Under the simplicial representation of the braid group, each dual
  simple braid acts by relabeling the vertices and rescaling specific
  edges.
\end{main}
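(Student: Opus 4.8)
The plan is to combine the explicit matrices recorded in Section~\ref{sec:matrices} with the combinatorics of the dual Garside structure. Recall that the dual simple braids of the $n$-string braid group are in bijection with the noncrossing partitions of the cyclically ordered vertex set $\{1,\dots,n\}$; write $\delta_P$ for the dual simple braid associated with a noncrossing partition $P$, and let $\pi_P\in S_n$ be its underlying permutation, whose cycles are the blocks of $P$ oriented by the circular order. The precise version of the theorem (Section~\ref{sec:matrices}) says that, in Krammer's original basis, $\delta_P$ acts on the $\binom{n}{2}$ coordinates by a monomial transformation: it carries the coordinate indexed by $\{i,j\}$ to the one indexed by $\{\pi_P(i),\pi_P(j)\}$ and multiplies it by a power $q^{\,m_{ij}(P)}$, where the exponent $m_{ij}(P)$ is prescribed by how the chord $\{i,j\}$ meets the blocks of $P$. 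Granting this form, the fact that the image again lies in the set of tuples representing nondegenerate euclidean simplices is immediate from Theorem~\ref{main:braid}, so the whole content is (i) to prove the monomial form and (ii) to pin down the exponents and recognize them as the relabeling and rescaling of the statement.

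For step (i) I would induct up the noncrossing partition lattice. The all-singletons partition gives the identity braid, whose matrix is the identity. If $P'$ is covered by $P$, then $\delta_P=\delta_{P'}\cdot a_{st}$ for the band generator $a_{st}$ whose chord $\{s,t\}$, added to $P'$, produces $P$; the covering relation forces $s$ and $t$ either to be consecutive vertices within a single block of $P$ or to lie in two distinct blocks of $P'$ that merge without creating a crossing. The inductive step is then a matrix product: multiply the matrix of $\delta_{P'}$, which by the inductive hypothesis is monomial with known exponents, by the matrix of $a_{st}$ recalled in Section~\ref{sec:matrices}, and check that the product again has the prescribed monomial form, now for $P$. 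The subtlety is that the matrix of a band generator is \emph{not} monomial in general --- it carries off-diagonal ``error'' entries involving the coordinates that share an index with $s$ or $t$ --- so the heart of the argument is that, because $\{s,t\}$ occupies the constrained position dictated by the cover, these error entries are killed, or collapse to a single surviving term, once $a_{st}$ is applied after the monomial matrix of $\delta_{P'}$. This is a bookkeeping computation with the explicit matrices, arranged so that the inductive hypothesis supplies exactly the vanishing that is needed.

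It is convenient to reduce first to the case where $P$ has a single nontrivial block $B$: a general $\delta_P$ is the product of the dual simple braids of the blocks of $P$, and these commute because noncrossing blocks are supported on disjoint, non-interleaved sets of strands, so their monomial forms simply multiply. For a single block, $\delta_B$ cyclically permutes the strands of $B$ ``in parallel,'' and one should see geometrically that this rotates the face spanned by the vertices of $B$, relabels those vertices cyclically, leaves the complementary face untouched, and lets the edges joining $B$ to its complement pick up powers of $q$ that record the winding of the rotating cluster --- exactly the edges the statement calls ``specific.'' This geometric picture is a useful guide for guessing and checking the exponents $m_{ij}(P)$, and the final answer should also be tested against the relation $\delta^{\,n}=\Delta^2$, whose image is the central scalar of the simplicial representation: the total relabeling must be trivial and the total rescaling the uniform one given by that scalar.

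The step I expect to be the main obstacle is precisely the cancellation in (i): showing that the genuinely non-monomial band-generator matrices compose with the monomial matrices of smaller dual simple braids to produce monomial matrices again. This is where the noncrossing hypothesis does all the work, and it is the reason the conclusion is special to dual simple braids, whereas Theorem~\ref{main:braid} --- with its more intricate reshaping --- holds for every braid. Tracking the exponents through the induction, especially for the edges between a nontrivial block and its complement, where the winding contributes, is the part most likely to require care.
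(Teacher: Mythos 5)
Your plan rests on a claim that is false: the dual simple braids do \emph{not} act by monomial matrices under the simplicial representation, and this is not what ``relabel and rescale'' means in the paper. The precise content of the theorem (Definition~\ref{def:relabel-rescale}) is a factorization $S_\sigma = P_\sigma R^\sigma_{\rc(\sigma)} = R^\sigma_{\lc(\sigma)} P_\sigma$, where $P_\sigma$ is a permutation matrix but $R^\sigma_{\tau}$ is an \emph{edge-rescaling matrix}, which is emphatically not monomial. Already for the first standard generator, $S_{12}$ in Example~\ref{ex:s12} has rows with three nonzero entries such as $(q^2-q)\,e_{12} + q\,e_{13} + (1-q)\,e_{23}$, and the rescaling matrices in Propositions~\ref{prop:triangle} and~\ref{prop:diagonal-edge} are genuinely dense with polynomial entries. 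Geometrically, an edge rescaling only literally rescales a spanning hypertree of edges (those inside blocks of $\sigma$ and of its complement); the remaining edges are dragged along in a way that mixes several edge norms linearly. Your proposed ``cancellation of the off-diagonal error entries'' cannot happen, because those entries survive and are the whole point. Similarly, the single-block ``winding'' picture, in which edges from $B$ to its complement pick up a pure power of $q$, is not what occurs.

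The induction skeleton you sketch is, however, the right shape, and the reduction to commuting blocks is sound; what needs to change is the invariant being inducted on. The paper's argument (Propositions~\ref{prop:std-gen}, \ref{prop:products}, \ref{prop:dual-gen}) first verifies the factorization $S_{ij}=P_{ij}R^{ij}_{\rc(ij)}=R^{ij}_{\lc(ij)}P_{ij}$ directly for every dual generator by comparing row descriptions, and then propagates it through products $\sigma_1\sigma_2$ (when the product is still a dual simple braid). The propagation step does not rely on cancellation of error terms; it relies on the flexibility of factorizations of $\delta$ through noncrossing permutations (Definition~\ref{def:five-perms}): one reassociates via $\delta=\sigma_1\sigma_2\sigma_3=\sigma_1\sigma_4\sigma_2=\sigma_5\sigma_1\sigma_2$ so that the inner rescaling matrices $R^{\sigma_1}_{\sigma_4\sigma_2}$ and $R^{\sigma_2}_{\sigma_1\sigma_4}$ combine into a single rescaling $R^{\sigma_1\sigma_2}_{\sigma_4}$, and then commutes the permutation matrices past. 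To fix your argument, replace the monomial invariant by the $P_\sigma R^\sigma_{\rc(\sigma)}$ factorization and prove the product step by this reassociation rather than by trying to make the off-diagonal terms vanish.
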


The article is structured around the three distinct contexts that play
a role in these results: discs, simplices and matrices.  In
Section~\ref{sec:discs} convexly punctured discs are used to define
noncrossing partitions and dual simple braids.  In
Section~\ref{sec:simplices} we describe various ways to systematically
reshape euclidean simplices, including the type of reshaping that we
call edge rescaling.  In Section~\ref{sec:matrices} we connect these
two relatively elementary discussions with the explicit matrices of
the simplicial representation of the braid group to establish our main
results.  Finally, Section~\ref{sec:remarks} explains our motivation
for pursuing this line of investigation and some ideas for future
work.

\section{Discs}\label{sec:discs}
In this section metric discs with a finite number of labeled points
are used to define the lattice of noncrossing partitions and the
finite set of dual simple braids.  We begin by recalling the notion of
a convexly punctured disc.

\begin{defn}[Convexly punctured disc]\label{def:discs}
  Let $\disc_n$ be a topological disc in the euclidean plane with a
  distinguished $n$-element subset that we call its \emph{punctures}
  or \emph{vertices}. When the disc $\disc_n$ is a convex subset of
  $\R^2$ and the convex hull of its $n$ punctures is an $n$-gon
  (i.e. every puncture occurs as a vertex of the convex hull) then we
  say that $\disc_n$ is a \emph{convexly punctured disc}. There is a
  natural cyclic ordering of the vertices corresponding to the
  clockwise orientation of the boundary cycle of the $n$-gon. A
  labeling of the vertices is said to be \emph{standard} if it uses
  the set $[n] := \{1,2,...,n\}$ (or better yet $\Z/n\Z$) and the
  vertices are labeled in the natural cyclic order.  More generally,
  when the vertices $p_i$ are bijectively labeled by elements $i$ in a
  finite set $A$, we refer to the convexly punctured disc as
  $\disc_A$.
\end{defn}

The $2$-elements subsets are of particular interest.

\begin{defn}[Edges]\label{def:edges}
  Let $\disc_n$ be a convexly punctured disc.  For each two element
  subset $\{i,j\} \subset [n]$, the convex hull of the corresponding
  points $p_i$ and $p_j$ in $\disc_n$ is called an \emph{edge} and
  denoted $e_{i,j} = e_{j,i}$, or even $e_{ij}$ when the comma is not
  needed for clarity.  When a \emph{standard name} is needed we insist
  $i < j$.  The number of edges is $\binom{n}{2}$ and we consistently
  use $N$ for this number throughout the article.  For later use, it
  is also convenient to impose a \emph{standard order} on the set of
  all $N=\binom{n}{2}$ edges.  We do so by lexigraphically ordering
  them by their standard names.  In $\disc_4$, for example, the
  standard names of its $6 = \binom{4}{2}$ edges in their standard
  order are $e_{12}$, $e_{13}$, $e_{14}$, $e_{23}$, $e_{24}$ and
  $e_{34}$.
\end{defn}

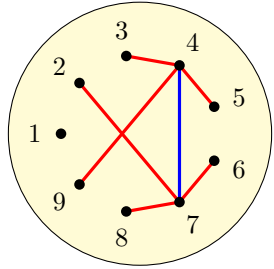
\begin{figure}
  \begin{tikzpicture}[scale=.7]
      \filldraw[YellowPoly] (0,0) circle (2.5cm);
      \foreach \x in {1,2,...,9} {\coordinate (\x) at (220-40*\x:1.5cm);}
      \draw[RedLine] (3)--(4)--(5) (4)--(9);
      \draw[RedLine] (2)--(7)--(6) (7)--(8);
      \draw[BlueLine] (4)--(7);
      \foreach \x in {1,2,...,9} {\fill (\x) circle (1mm);}
      \foreach \x in {1,2,...,9} {\draw (220-40*\x:2cm) node {\small \x};}
  \end{tikzpicture}
  \caption{The edges $e_{34}$, $e_{49}$ and $e_{67}$ are to the left
    of the edge $e_{47}$ and the edges $e_{27}$, $e_{78}$ and $e_{45}$
    are to the right.  This is because ordered pairs such as
    $(e_{34},e_{47})$ and $(e_{67},e_{47})$ are clockwise while the
    ordered pair $(e_{27},e_{47})$ is
    counterclockwise.\label{fig:left-right}}
\end{figure}

We also need words to describe the position of one edge relative to
another.

\begin{defn}[Pairs of edges]\label{def:edge-pairs}
  Let $(e_{ij},e_{kl})$ be an ordered pair of edges in a convexly
  punctured disc $\disc_n$ and let $B = \{i,j,k,l\}$.  If we restrict
  our attention to the convex subdisc $\disc_B$ (where $\disc_B$ is an
  $\epsilon$-neighborhood of the convex hull of the vertices indexed
  by the elements in $B$) then there are exactly five distinct
  possible configurations.  We call the possibilities crossing,
  noncrossing, identical, clockwise and counterclockwise.  When all
  four endpoints are distinct (i.e. when $|B| = 4$) these edges are
  either \emph{crossing} or \emph{noncrossing} depending on whether or
  not they intersect.  At the other extreme, when $e_{ij}$ and
  $e_{kl}$ have both endpoints in common, they are \emph{identical}.
  Finally, when these edges have exactly one endpoint in common, the
  convex hull of the three endpoints is a triangle and the edges occur
  as consecutive edges in its boundary cycle.  We call this
  arrangement \emph{clockwise} or \emph{counterclockwise} depending on
  the orientation of the boundary which ensures that $e_{kl}$ is the
  edge that occurs immediately after $e_{ij}$.  More colloquially we
  say that $e_{kl}$ is \emph{to the right (left)} of $e_{ij}$ and
  $e_{ij}$ is \emph{to the left (right)} of $e_{kl}$ when the ordered
  pair $(e_{ij},e_{kl})$ is clockwise (counterclockwise).  Examples
  are shown in Figure~\ref{fig:left-right}.
\end{defn}

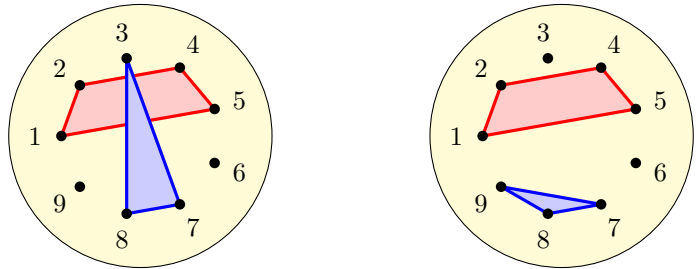
\begin{figure}
  \begin{tikzpicture}[scale=.7]
    \begin{scope}[xshift = -4cm]
      \filldraw[YellowPoly] (0,0) circle (2.5cm);
      \foreach \x in {1,2,...,9} {\coordinate (\x) at (220-40*\x:1.5cm);}
      \draw[RedPoly] (1)--(2)--(4)--(5)--cycle;
      \draw[BluePoly] (3)--(7)--(8)--cycle;
      \foreach \x in {1,2,...,9} {\fill (\x) circle (1mm);}
      \foreach \x in {1,2,...,9} {\draw (220-40*\x:2cm) node {\small \x};}
    \end{scope}

    \begin{scope}[xshift = 4cm]
      \filldraw[YellowPoly] (0,0) circle (2.5cm);
      \foreach \x in {1,2,...,9} {\coordinate (\x) at (220-40*\x:1.5cm);}
      \draw[RedPoly] (1)--(2)--(4)--(5)--cycle;
      \draw[BluePoly] (7)--(8)--(9)--cycle;
      \foreach \x in {1,2,...,9} {\fill (\x) circle (1mm);}
      \foreach \x in {1,2,...,9} {\draw (220-40*\x:2cm) node {\small \x};}
    \end{scope}
  \end{tikzpicture}
  \caption{The subsets $\{1,2,4,5\}$ and $\{3,7,8\}$ are crossing.
    The subsets $\{1,2,4,5\}$ and $\{7,8,9\}$ are
    noncrossing.\label{fig:noncrossing}}
\end{figure}

Noncrossing partitions are defined in a convexly punctured disc.

\begin{defn}[Noncrossing partitions]\label{def:noncrossing} 
  Let $\disc_n$ be a convexly punctured disc.  We say that two subsets
  $B, B' \subset [n]$ are \emph{noncrossing} when the convex hulls of
  the corresponding sets of vertices in $\disc_n$ are completely
  disjoint.  See Figure~\ref{fig:noncrossing}.  More generally, a
  partition $\sigma$ of the set $[n]$ is \emph{noncrossing} when its
  blocks are pairwise noncrossing.  Noncrossing partitions are usually
  ordered by refinement, so that $\sigma < \tau$ if and only if each
  block of $\sigma$ is contained in some block of $\tau$.  Under this
  ordering, the set of all noncrossing partitions form a bounded
  graded lattice denoted $NC_n$.  The poset $NC_4$ is shown in
  Figure~\ref{fig:nc4}.  The number of noncrossing partitions in
  $NC_n$ is given by the $n$-th Catalan number $C_n =
  \frac{1}{n+1}\binom{2n}{n}$.
\end{defn}

\begin{figure}
  \begin{tikzpicture}[scale = .5]
    \node (A) at (0,-4.5) [Rect] {}; \node (B) at (10,-2) [Rect] {};
    \node (C) at (6,-2) [Rect] {};   \node (D) at (2,-2) [Rect] {}; 
    \node (E) at (-2,-2) [Rect] {};  \node (F) at (-6,-2) [Rect] {};
    \node (G) at (-10,-2) [Rect] {}; \node (H) at (10,2) [Rect] {};
    \node (I) at (6,2) [Rect] {};    \node (J) at (2,2) [Rect] {};
    \node (K) at (-2,2) [Rect] {};   \node (L) at (-6,2) [Rect] {};
    \node (M) at (-10,2) [Rect] {};  \node (N) at (0,4.5) [Rect] {};
    
    \newcommand{\placedots}{\foreach \x in {1,2,3,4} {\coordinate (\x) at (315-90*\x:.7);}}
    \newcommand{\drawdots}{\foreach \x in {1,2,3,4} {\draw (\x) node [smallDot] {};}}
    \begin{scope}[BluePoly]
      \begin{scope}[shift={(0,-4.5)}] \placedots \drawdots \end{scope}
      \begin{scope}[shift={(10,-2)}]\placedots \draw (3)--(4); \drawdots \end{scope}
      \begin{scope}[shift={(6,-2)}]\placedots \draw (1)--(4);\drawdots \end{scope}
      \begin{scope}[shift={(2,-2)}] \placedots \draw (2)--(4);\drawdots \end{scope}
      \begin{scope}[shift={(-2,-2)}] \placedots \draw (1)--(3);\drawdots \end{scope}
      \begin{scope}[shift={(-6,-2)}] \placedots \draw (2)--(3);\drawdots \end{scope}
      \begin{scope}[shift={(-10,-2)}] \placedots \draw (1)--(2);\drawdots \end{scope}
      \begin{scope}[shift={(10,2)}] \placedots \filldraw (1)--(3)--(4)--cycle;\drawdots \end{scope}
      \begin{scope}[shift ={(6,2)}] \placedots \filldraw (2)--(3)--(4)--cycle;\drawdots \end{scope}	
      \begin{scope}[shift ={(2,2)}] \placedots \draw (1)--(2); \draw (3)--(4);\drawdots \end{scope}
      \begin{scope}[shift ={(-2,2)}] \placedots \draw (2)--(3); \draw (1)--(4);\drawdots \end{scope}
      \begin{scope}[shift ={(-6,2)}] \placedots \filldraw (1)--(2)--(4)--cycle;\drawdots \end{scope}
      \begin{scope}[shift={(-10,2)}] \placedots \filldraw (1)--(2)--(3)--cycle;\drawdots \end{scope}
      \begin{scope}[shift={(0,4.5)}] \placedots \filldraw (1)--(2)--(3)--(4)--cycle;\drawdots \end{scope}
    \end{scope}
    
    \path (A) edge (B) edge (C) edge (D) edge (E) edge (F) edge (G);
    \path (B) edge (H) edge (I) edge (J); 
    \path (C) edge (H) edge (K) edge (L);
    \path (D) edge (I) edge (L);
    \path (E) edge (H) edge (M);
    \path (F) edge (I) edge (K) edge (M);
    \path (G) edge (J) edge (L) edge (M);
    \path (N) edge (H) edge (I) edge (J) edge (K) edge (L) edge (M);
  \end{tikzpicture}
  \caption{Noncrossing partition lattice $NC_4$.\label{fig:nc4}}
\end{figure}
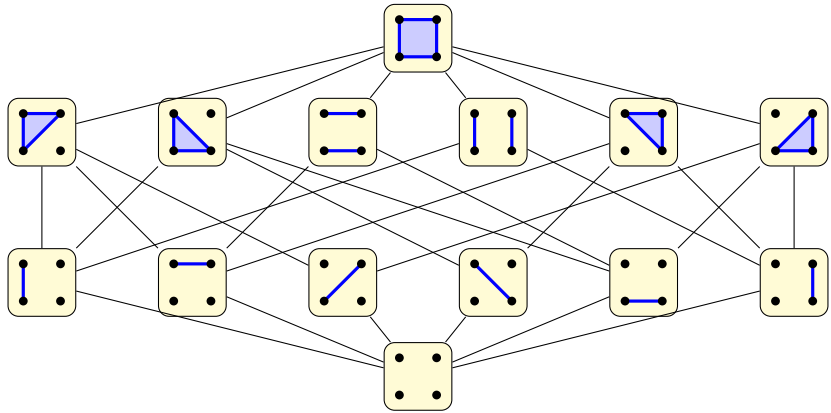

To each noncrossing partition there is a corresponding permutation.

\begin{defn}[Noncrossing permutations]\label{def:perms}
  To every subset $B \subset [n]$ with at least two elements we
  associate a permutation in $\sym_n$ obtained by linearly ordering
  the elements in $B$.  More generally, we associate a permutation to
  each noncrossing paritition by multiplying the permutations
  associated to each block and we call the result a \emph{noncrossing
    permutation}.  Since distinct blocks are disjoint, their
  permutations commute and the product is well-defined.  Thus $B =
  \{1,3,4\}$ becomes the permutation $(1,3,4)$ and the partition
  $\sigma = \{\{1,3,4\},\{2\},\{5,6,7,8,9\}\}$ becomes the permutation
  $(1,3,4)(5,6,7,8,9)$.  We identify each noncrossing partition with
  its corresponding noncrossing permutation, using the same symbol for
  both.  The permutation associated to the full set $[n]$ is an
  important $n$-cycle that we call $\delta$.
\end{defn}

As is well-known, the elements of the braid group can be identified
with (equivalence classes of) motions of $n$ distinct labeled points
in a disc such as $\disc_n$.  The dual simple braids are a finite set
of braids indexed by the noncrossing permutations as follows.

\begin{defn}[Rotations]\label{defn:rotate} 
  The \emph{dual Garside element} $s_\delta$ of the $n$-string braid
  group is the motion where each labeled point in $\disc_n$ moves
  clockwise along the boundary of the convex hull of all $n$ points to
  the next vertex.  More generally, for each set $B \subset \{1,
  \ldots, n \}$, let $P_B$ be the convex hull of the vertices indexed
  by $B$ and let $\disc_B$ be an $\epsilon$-neighborhood of $P_B$.
  The braid group element $s_B$ is a similiar motion restricted to the
  subdisc $\disc_B$, i.e. the vertices in the subdisc move clockwise
  along one side of the polygon $P_B$ to the next vertex, leaving all
  other vertices fixed.  See Figure~\ref{fig:rotation}.  When $B$ has
  at most $1$ element, the motion is trivial.  When $B$ has two
  elements, the points avoid collisions by passing on the left.
\end{defn} 

\begin{figure}
  \begin{tikzpicture}[scale=.7]
    \begin{scope}[xshift = -4cm]
      \filldraw[YellowPoly] (0,0) circle (2.5cm);
      \foreach \x in {1,2,...,9} {\coordinate (\x) at (220-40*\x:1.5cm);}
      \draw[very thick,color=blue,fill=blue!15] (1)--(3)--(7)--cycle;
      \drawArrow{(1)}{(3)}
      \drawArrow{(3)}{(7)}
      \drawArrow{(7)}{(1)}
      \foreach \x in {1,2,...,9} {\fill (\x) circle (1mm);}
      \foreach \x in {1,2,...,9} {\draw (220-40*\x:2cm) node {\small \x};}
    \end{scope}

    \draw[|-angle 90,very thick] (-1,0) -- node [above] {$s_{137}$} (1,0);
    
    \begin{scope}[xshift = 4cm]
      \filldraw[YellowPoly] (0,0) circle (2.5cm);
      \foreach \x in {1,2,...,9} {\coordinate (\x) at (220-40*\x:1.5cm);}
      \draw[very thick,color=blue,fill=blue!15] (1)--(3)--(7)--cycle;
      \foreach \x in {1,2,...,9} {\fill (\x) circle (1mm);}
      \foreach \x in {2,4,5,6,8,9} {\draw (220-40*\x:2cm) node {\small \x};}
      \draw (220-40*1:2cm) node {\small 7};
      \draw (220-40*3:2cm) node {\small 1};
      \draw (220-40*7:2cm) node {\small 3};
    \end{scope}
  \end{tikzpicture}
  \caption{The rotation $s_{137}$.\label{fig:rotation}}
\end{figure}
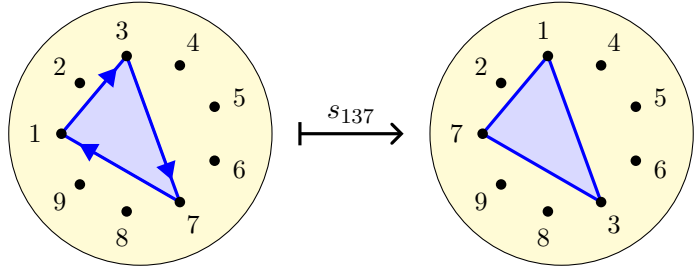

Rotations can be used to assign a braid to each noncrossing partition.

\begin{defn}[Dual simple braids]\label{def:simple-braids} 
  The \emph{dual simple braids} are elements of the braid group in
  one-to-one correspondence with the set of noncrossing partitions
  $NC_n$.  More precisely, for each noncrossing partition $\sigma$, we
  associate the product of the rotations corresponding to each of its
  blocks and call the result $s_\sigma$.  Because rotations of
  noncrossing blocks take place in disjoint subdiscs they commute and
  the resulting element in the braid group is well-defined.  Note that
  the noncrossing permutation $\sigma$ is the permutation of the
  vertices induced by $s_\sigma$.  The dual simple braids in
  $\braid_4$ written as products of rotations are shown in
  Figure~\ref{fig:b4-interval}.
\end{defn}

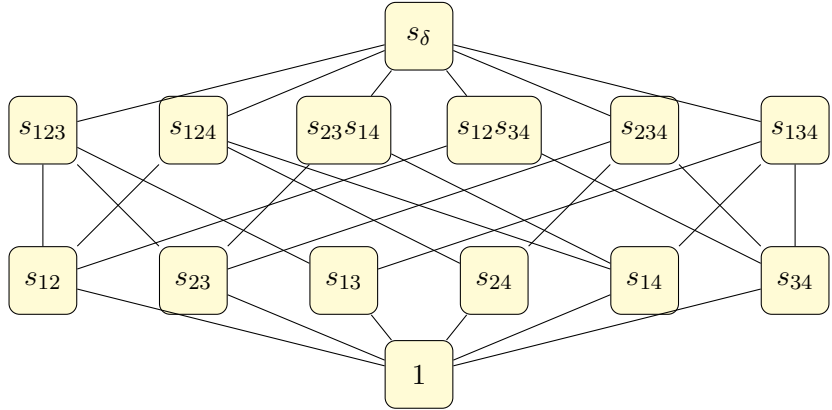
\begin{figure}
  \begin{tikzpicture}[scale = .5]
    \node (A) at (0,-4.5) [Rect] {$\id$}; 
    \node (B) at (10,-2) [Rect] {$s_{34}$};
    \node (C) at (6,-2) [Rect] {$s_{14}$};      
    \node (D) at (2,-2) [Rect] {$s_{24}$}; 
    \node (E) at (-2,-2) [Rect] {$s_{13}$};      
    \node (F) at (-6,-2) [Rect] {$s_{23}$};
    \node (G) at (-10,-2) [Rect] {$s_{12}$};
    \node (H) at (10,2) [Rect] {$s_{134}$};
    \node (I) at (6,2) [Rect] {$s_{234}$};     
    \node (J) at (2,2) [Rect] {$s_{12}s_{34}$};
    \node (K) at (-2,2) [Rect] {$s_{23}s_{14}$};  
    \node (L) at (-6,2) [Rect] {$s_{124}$};
    \node (M) at (-10,2) [Rect] {$s_{123}$};
    \node (N) at (0,4.5) [Rect] {$s_\delta$};

    \path (A) edge (B) edge (C) edge (D) edge (E) edge (F) edge (G);
    \path (B) edge (H) edge (I) edge (J); 
    \path (C) edge (H) edge (K) edge (L);
    \path (D) edge (I) edge (L);          
    \path (E) edge (H) edge (M);
    \path (F) edge (I) edge (K) edge (M); 
    \path (G) edge (J) edge (L) edge (M);
    \path (N) edge (H) edge (I) edge (J) edge (K) edge (L) edge (M);
  \end{tikzpicture}
  \caption{The dual simple elements in
    $\braid_4$.\label{fig:b4-interval}}
\end{figure}

It is useful to have specific names for four sets of dual simple
braids.

\begin{defn}[Four sets of simple braids]\label{def:four-sets}
  The \emph{standard generating set} for the braid group $\braid_n$
  consists of the $n-1$ rotations of the form $s_{ij}$ with $1 \leq i
  <n$ and $j=i+1$.  The \emph{dual generators} of $\braid_n$ are the
  set of all $N=\binom{n}{2}$ rotations $s_B$ where $B$ has exactly
  two elements.  The \emph{rotations} $s_B$ with $|B| \neq 1$ form a
  third set, and the full set of all dual simple braids form a fourth
  set.  We write $\std_n \subset \gen_n \subset \rot_n \subset
  \simp_n$ for these four nested sets, whose sizes are $n-1$, $N$,
  $2^n-n$ and $C_n$ (the $n$-th Catalan number).  When $n=4$, these
  sets have $3$, $6$, $11$ and $14$ elements.
\end{defn}

The multiplication in the symmetric group can be used to extract other
information about noncrossing partitions.

\begin{defn}[Multiplication]\label{def:mult}
  For consistency with our latter conventions we view permutations as
  functions and thus we multiply them from \emph{right to left}.  For
  example, the product $(1,2,3) \cdot (3,4,5)$ is $(1,2,3,4,5)$.  More
  generally, if $B$, $\{i\}$ and $C$ are pairwise disjoint subsets of
  $[n]$ such that there is a place to start reading the boundary cycle
  of the convex hull of the points corresponding to the elements in $B
  \cup \{i\} \cup C$ so that, reading clockwise, one encounters all of
  the vertices indexed by elements in $B$, followed by $p_i$, followed
  by all of the vertices indexed by the elements in $C$, then $s_{Bi}
  s_{iC} = s_{BiC}$.  Here we follow the conventions of \cite{MaMc09},
  removing parentheses from singletons, and using juxtaposition to
  indicate union.
\end{defn}

The reader should be careful to note that our multiplication
convention differs from much of the literature on the braid groups
where multplication is from left to right.  Thus extra vigilance is
required.  The general multiplication rule given above, for example,
is stated in a slightly different form in \cite{MaMc09}.  The
multiplication can be used to define left and right complements of
noncrossing permutations.

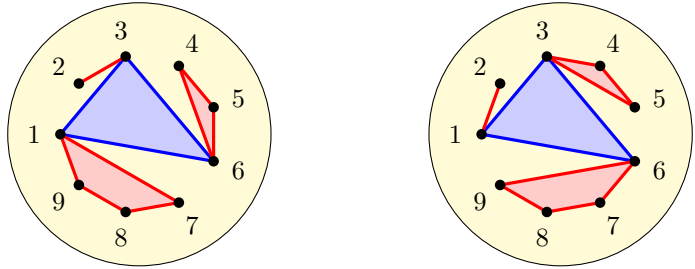
\begin{figure}
  \begin{tikzpicture}[scale=.7]
    \begin{scope}[xshift=-4cm]
      \filldraw[YellowPoly] (0,0) circle (2.5cm);
      \foreach \x in {1,2,...,9} {\coordinate (\x) at (220-40*\x:1.5cm);}
      \draw[BluePoly] (1)--(3)--(6)--cycle;
      \draw[RedPoly] (2)--(3);
      \draw[RedPoly] (6)--(5)--(4)--cycle;
      \draw[RedPoly] (1)--(7)--(8)--(9)--cycle;
      \foreach \x in {1,2,...,9} {\fill (\x) circle (1mm);}
      \foreach \x in {1,2,...,9} {\draw (220-40*\x:2cm) node {\small \x};}
    \end{scope}
    \begin{scope}[xshift=4cm]
      \filldraw[YellowPoly] (0,0) circle (2.5cm);
      \foreach \x in {1,2,...,9} {\coordinate (\x) at (220-40*\x:1.5cm);}
      \draw[BluePoly] (1)--(3)--(6)--cycle;
      \draw[RedPoly] (1)--(2);
      \draw[RedPoly] (3)--(4)--(5)--cycle;
      \draw[RedPoly] (6)--(7)--(8)--(9)--cycle;
      \foreach \x in {1,2,...,9} {\fill (\x) circle (1mm);}
      \foreach \x in {1,2,...,9} {\draw (220-40*\x:2cm) node {\small
          \x};}
    \end{scope}
  \end{tikzpicture}
  \caption{$(23)(456)(1789)$ is the left complement of $(136)$ in
    $\sym_9$ and $(12)(345)(6789)$ is its right
    complement.\label{fig:complements}}
\end{figure}

\begin{defn}[Complements]\label{def:complements}
  Let $\sigma$ be a noncrossing permutation in $\sym_n$ and recall
  that $\delta$ is the $n$-cycle $(1,2,\ldots, n)$.  The \emph{left
    complement} of $\sigma$ is the unique element $\sigma'$ such that
  $\sigma' \sigma = \delta$ and its \emph{right complement} is the
  unique element $\sigma''$ such that $\sigma \sigma'' = \delta$.  We
  denote these permutations by $\sigma'=\lc(\sigma)$ and $\sigma'' =
  \rc(\sigma)$.  The permutation $\lc(\sigma)$ is always also a
  noncrossing permutation and, in fact, the edges in its blocks are
  precisely those that are to the left or noncrossing with respect to
  each of the edges in the blocks of $\sigma$.  Similarly
  $\rc(\sigma)$ is a noncrossing permutation whose blocks are formed
  by the edges that are to the right or noncrossing with respect to
  each edge in a block of $\sigma$.  See Figure~\ref{fig:complements}.
\end{defn}

The following observation is not crucial to our results, but we
sometimes use this language.

\begin{rem}[Hypertrees]\label{rem:hypertrees}
  A \emph{hypergraph} is a generalization of a graph where its
  \emph{hyperedges} are allowed to span more than two vertices, and a
  \emph{hypertree} is the natural generalization of a tree.  As can be
  seen in Figure~\ref{fig:complements}, the blocks of the noncrossing
  partition associated to a dual simple element and the blocks of one
  of its complements together form the hyperedges of a planar
  hypertree.
\end{rem}

\begin{figure}
  \begin{tikzpicture}[scale=.7]
      \filldraw[YellowPoly] (0,0) circle (2.5cm);
      \foreach \x in {1,2,...,9} {\coordinate (\x) at (220-40*\x:1.5cm);}
      \draw[RedPoly] (1)--(2)--(8)--(9)--cycle;
      \draw[RedPoly] (1)--(7)--(8)--(9)--cycle;
      \draw[RedPoly] (1)--(5)--(8)--(9)--cycle;
      \draw[RedLine,dashed] (1)--(5)--(8)--(9)--cycle;
      \draw[RedLine,dashed] (1)--(2)--(8);
      \draw[RedLine,dashed] (1)--(7)--(8);
      \draw[BluePoly] (2)--(3)--(4)--(5)--cycle;
      \draw[BluePoly] (5)--(6)--(7)--cycle;
      \foreach \x in {1,2,...,9} {\fill (\x) circle (1mm);}
      \foreach \x in {1,2,...,9} {\draw (220-40*\x:2cm) node {\small \x};}
  \end{tikzpicture}
  \caption{If $\sigma_1 = (2,3,4,5)$ and $\sigma_2 =(5,6,7)$, then the
    permutations $\sigma_3$, $\sigma_4$ and $\sigma_5$ defined in
    Definition~\ref{def:five-perms} are $\sigma_3 = (1,7,8,9)$,
    $\sigma_4 = (1,5,8,9)$ and $\sigma_5 =
    (1,2,8,9)$.\label{fig:five-perms}}
\end{figure}
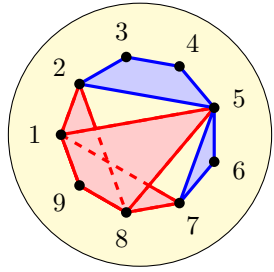

The Hasse diagram of the noncrossing partition lattice can actually be
viewed as a portion of the Cayley graph of $\sym_n$ with respect the
generating set of all transpostions, specifically the portion between
the identity and the element $\delta$.  This way of looking at the
noncrossing partition lattice combined with the fact that the set of
transpositions in $\sym_n$ is closed under conjugation, helps to
explain why factorizations of $\delta$ as a product of noncrossing
permutations are so flexible.  The only aspect of this flexibility
that we need here is the following.

\begin{defn}[Five permutations]\label{def:five-perms}
  If $\sigma_1$ and $\sigma_2$ are permutations in $\sym_n$ such that
  $\sigma_1$, $\sigma_2$ and their product $\sigma_1 \sigma_2$ are all
  three noncrossing, then there exist noncrossing permutations
  $\sigma_3$, $\sigma_4$ and $\sigma_5$ such that $\delta = \sigma_1
  \sigma_2 \sigma_3 = \sigma_1 \sigma_4 \sigma_2 = \sigma_5 \sigma_1
  \sigma_2$.  The permutations $\sigma_5$ and $\sigma_3$ are simply
  the left and right complements of the product $\sigma_1 \sigma_2$,
  while $\sigma_4$ is obtained by conjugation.  An example is shown in
  Figure~\ref{fig:five-perms}.
\end{defn}

\section{Simplices}\label{sec:simplices}
In this section we discuss the geometry of euclidean simplices with
$n$ labeled vertices and, in particular, how this geometry changes
under certain carefully controlled deformations.  As in
\cite{BrMc-factor}, we start by distinguishing between points and
vectors.

\begin{defn}[Points]\label{def:pts}
  Let $V$ be an $(n-1)$-dimensional \emph{real vector space} with a
  fixed positive definite inner product but no fixed basis and let $E$
  be an $(n-1)$-dimensional \emph{euclidean space}, which may be
  defined as a set with a fixed simply-transitive action of the
  additive group of $V$.  The structure of $E$ is essentially that of
  $V$ but the location of the origin has been forgotten.  The elements
  of $V$ are \emph{vectors}, the elements of $E$ are \emph{points},
  and we write $\langle u, v \rangle$ for the inner product of vectors
  $u$ and $v$.  Two points $p$ and $p'$ determine a line segment $e$
  called an \emph{edge} and $p$ and $p'$ are its \emph{endpoints}.  By
  the simply-transitive action of $V$ on $E$, they also determine two
  vectors: the unique vector $v$ that sends $p$ to $p'$ and the vector
  $-v$ which sends $p'$ to $p$.  The pair $\pm v$ is a \emph{lax
    vector}.  When the points involved are labeled, say $p_i$ and
  $p_j$, we write $e_{ij} = e_{ji}$ for the edge they span, and $v =
  v_{ij}$ and $-v = v_{ji}$ for the two vectors they determine.  The
  \emph{norm} of a vector $v$ is $\langle v, v \rangle$, which is also
  the square of the length of the corresponding edge $e$.  We write
  $\norm:V \to \R$ for the norm map.  Note that the norm of a lax
  vector is well-defined since $\norm(v) = \norm(-v)$ and the norm of
  an edge is the norm of the lax vector determined by its endpoints.
  When the points involved are labeled we write $a_{ij}$ for
  $\norm(v_{ij})$.
\end{defn}

\begin{defn}[Simplices]\label{def:simplices}
  A set $\{p_i\}$ of $n$ labeled points in $E$ is in \emph{general
    position} if this set is not contained in any proper affine
  subspace of $E$, and the convex hull of such a set is a
  \emph{labeled euclidean simplex} $\Delta$ of dimension~$(n-1)$.  For
  any such labeled euclidean simplex $\Delta$ we use subsets of
  punctures in the convexly punctured disc $\disc_n$ to describe
  various simplicial faces of $\Delta$ via their vertex labelings.
  For example, the three blocks of the left complement of $s_{136}$
  shown in Figure~\ref{fig:complements} correspond to an edge, a
  triangle and a tetrahedron in any $8$-dimensional simplex $\Delta$
  with $9$ labeled vertices.
\end{defn}

We are primarily interested in the isometry class of a labeled
euclidean simplex $\Delta$ and this is completely determined by the
ordered list of the norms of its edges.

\begin{defn}[Edge norm vectors]\label{def:tuples}
  Let $\Delta$ be a labeled euclidean simplex with $n$ vertices.  The
  \emph{edge norm vector} of $\Delta$ is a column vector $\vv$ of the
  $N=\binom{n}{2}$ positive real numbers $a_{ij}$ which are the norms
  of its edges $e_{ij}$, listed in the standard lexicographic order of
  the edges as discussed in Definition~\ref{def:edges}.
\end{defn}

Edge norm vectors characterize isometry classes of labeled euclidean
simplices and as a result when we reshape a labeled euclidean simplex,
these changes to its geometry are captured by the modifications that
occur in its edge norm vector.  The well-known formula $2 \langle u,v
\rangle = \norm(u+v) - \norm(u) -\norm(v)$ shows that inner products
of vectors can be calculated in terms of their norms, but we need a
slightly more general formula that computes the inner product of two
vectors determined by four possibly distinct points in a euclidean
space~$E$.

\begin{figure}
  \begin{tikzpicture}[scale = .8]
    \coordinate (1) at (0,0);
    \coordinate (2) at (3.5,5.46);
    \coordinate (3) at (7,1);
    \coordinate (4) at (5.4,-1.2);
    \draw[GreenPoly] (1)--(2)--(3)--(4)--cycle;
    \draw (2)--(3)--(4)--cycle;
    \draw (1)--(2) node [midway,above left,black] {$a$};
    \draw[dashed] (1)--(3) node [midway,above,black] {$b$};
    \draw[black] (1)--(4) node [midway,below,black] {$c$}; 
    \draw (2)--(3) node [midway,above right,black] {$d$};
    \draw (2)--(4) node [midway,right,black] {$e$};
    \draw (3)--(4) node [midway,right,black] {$f$};
    \fill (1) circle (.6mm) node[anchor=east] {$p_i$};
    \fill (2) circle (.6mm) node[anchor=south] {$p_j$};
    \fill (3) circle (.6mm) node[anchor=west] {$p_k$};
    \fill (4) circle (.6mm) node[anchor=north] {$p_l$};
  \end{tikzpicture}
  \caption{Tetrahedron determined by $4$ points, edges labeled by
    norm.}\label{fig:tetrahedron}
\end{figure}
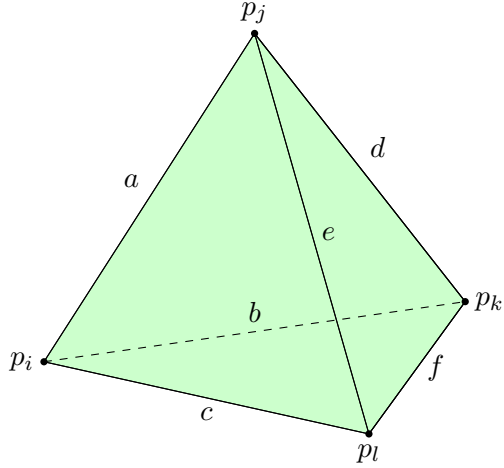

\begin{prop}[Inner products and norms]\label{prop:inner-prods} 
  If $p_i, p_j, p_k$ and $p_l$ are four not necessarily distinct
  points in a euclidean space $E$, then the inner product $2\langle
  v_{ij}, v_{kl} \rangle = a_{il} + a_{jk} - a_{ik} - a_{jl}$.
\end{prop}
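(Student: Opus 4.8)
The plan is to reduce everything to the three‑point polarization identity $2\langle u,v\rangle = \norm(u+v)-\norm(u)-\norm(v)$ (equivalently $2\langle u,v\rangle = \norm(u)+\norm(v)-\norm(u-v)$) recalled just before the statement, applied twice. First I would base all the relevant vectors at the single point $p_i$: set $u = v_{ij}$, $a = v_{ik}$ and $b = v_{il}$. The simply‑transitive action of $V$ on $E$ then gives the additive relations $v_{jk} = a - u$, $v_{jl} = b - u$ and, crucially, $v_{kl} = b - a$, so that the norms appearing in the statement are $a_{ij}=\norm(u)$, $a_{ik}=\norm(a)$, $a_{il}=\norm(b)$, $a_{jk}=\norm(a-u)$ and $a_{jl}=\norm(b-u)$.

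Next I would expand, using bilinearity of the inner product, $2\langle v_{ij},v_{kl}\rangle = 2\langle u,\, b-a\rangle = 2\langle u,b\rangle - 2\langle u,a\rangle$, and rewrite each of the two terms with the polarization identity: $2\langle u,b\rangle = \norm(u)+\norm(b)-\norm(b-u) = a_{ij}+a_{il}-a_{jl}$ and $2\langle u,a\rangle = \norm(u)+\norm(a)-\norm(a-u) = a_{ij}+a_{ik}-a_{jk}$. Subtracting cancels the two copies of $a_{ij}$ and leaves exactly $a_{il}+a_{jk}-a_{ik}-a_{jl}$, which is the claimed formula.

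Finally I would remark that no case analysis is needed for the ``not necessarily distinct'' hypothesis: the computation above is purely algebraic in the vectors $u,a,b$ and never assumes they are nonzero or distinct, and if two of the four points coincide both sides visibly vanish (e.g. if $p_i=p_j$ then $u=0$, while $a_{il}=a_{jl}$ and $a_{ik}=a_{jk}$). There is no genuine obstacle here; the only points demanding care are the affine‑versus‑vector bookkeeping — keeping straight that $v_{ij}$ is the vector carrying $p_i$ to $p_j$, so that the telescoping relation $v_{kl}=v_{il}-v_{ik}$ holds with the correct sign — and verifying that the diagonal norm terms cancel. An alternative, equally short route is to fix any origin $o\in E$, write $x_m = p_m - o$, expand both sides into the quantities $\langle x_m, x_{m'}\rangle$ via $a_{mm'}=\norm(x_m)+\norm(x_{m'})-2\langle x_m,x_{m'}\rangle$, and observe that the four terms $\norm(x_i),\norm(x_j),\norm(x_k),\norm(x_l)$ each occur once with each sign on the right‑hand side and hence cancel.
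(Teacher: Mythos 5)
Your proof is correct, but it follows a genuinely different route than the paper's. You fix $p_i$ as a common base point, write $u=v_{ij}$, $a=v_{ik}$, $b=v_{il}$ so that $v_{kl}=b-a$, and then use bilinearity to split $2\langle u,b-a\rangle$ into two inner products handled by a single application each of the polarization identity — a total of two applications, with a clean cancellation of $a_{ij}$. The paper instead decomposes along the chain $p_i\to p_j\to p_k\to p_l$ (so $v_{ik}=v_{ij}+v_{jk}$, $v_{jl}=v_{jk}+v_{kl}$, and $v_{il}=v_{ij}+v_{jk}+v_{kl}$), expands three norms, and solves the resulting linear system for $2\langle v_{ij},v_{kl}\rangle$, arriving at $c+d-b-e$. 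Both arguments are elementary and essentially equal in length; your single-basepoint version has less bookkeeping and makes the cancellation of diagonal terms transparent, while the paper's chain version lines up pictorially with the tetrahedron in its Figure~\ref{fig:tetrahedron} and recycles the subexpansions $2\langle v_{ij},v_{jk}\rangle$ and $2\langle v_{jk},v_{kl}\rangle$, which is consonant with how later rescaling computations (Propositions~\ref{prop:triangle} and~\ref{prop:diagonal-edge}) are organized. Your closing remarks about the ``not necessarily distinct'' hypothesis and the alternative origin-based expansion are both sound and, if anything, make the degenerate cases more visibly harmless than the paper's treatment does.
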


\begin{proof}
  To improve readability, we write $a$, $b$, $c$, $d$, $e$ and $f$ for
  the norms $a_{ij}$, $a_{ik}$, $a_{il}$, $a_{jk}$, $a_{jl}$ and
  $a_{kl}$, respectively.  The situation under discussion is shown in
  Figure~\ref{fig:tetrahedron}.  Expanding the norms of $v_{ik} =
  v_{ij} + v_{jk}$ and $v_{jl} = v_{jk} + v_{kl}$ produces the
  identities $b = a + d + 2\langle v_{ij}, v_{jk} \rangle$ and $e = d
  + f + 2\langle v_{jk}, v_{kl} \rangle$. Expanding the norm of
  $v_{il} = v_{ij} + v_{jk} + v_{kl}$ produces $c = a + d + f +
  2\langle v_{ij}, v_{jk} \rangle + 2 \langle v_{jk}, v_{kl} \rangle +
  2 \langle v_{ij}, v_{kl}\rangle$. Thus $2 \langle v_{ij}, v_{jk}
  \rangle = b-a-d$, $2\langle v_{jk}, v_{kl} \rangle = e - d - f$, and
  $2 \langle v_{ij}, v_{kl} \rangle = c - a - d - f - (b - a - d) - (e
  - d - f) = c + d -b -e$.
\end{proof}

The following definition identifies a class of geometric reshapings of
labeled euclidean simplices that are particularly elegant and easy to
describe. We call them edge rescalings and, as far as we are aware,
they have not been previously discussed in the literature.

\begin{defn}[Edge rescaling]\label{def:edge-rescaling}
  Let $\Delta$ and $\Delta'$ be two labeled euclidean simplices with
  $n$ vertices situated in a common euclidean space.  We say that an
  edge $e_{ij}$ in $\Delta$ is merely \emph{rescaled} if it and the
  corresponding edge $e'_{ij}$ in $\Delta'$ point in the same
  direction.  More generally, we say that $\Delta'$ is an \emph{edge
    rescaling} of $\Delta$ if there exist enough pairs of
  corresponding edges pointing in the same direction (but with
  possibly different lengths) to form a basis for the vector space out
  of these common direction vectors.
\end{defn}

\begin{rem}[Spanning trees]\label{rem:trees}
  Let $\Delta'$ be a labeled euclidean simplex which is an edge
  rescaling of $\Delta$.  By definition there are sufficiently many
  edges that are merely rescaled to form a basis out of the
  corresponding vectors and a minimal set of rescaled edges in the
  $1$-skeleton of $\Delta$ would form a spanning tree in this complete
  graph.  There might, however, be more than one such spanning tree of
  merely rescaled edges.  When two edges in $\Delta$ share a common
  endpoint and are both rescaled by the same scale factor, the
  triangle they span, and in particular the third edge in that
  triangle, is also rescaled by the same scale factor.  Thus any two
  of these three edges could be included in the spanning tree.  In
  fact, it would be more canonical to identify the \emph{maximal}
  simplicial faces that are merely rescaled.  For each scale factor,
  there would be a partition of the vertices into maximal subsimplices
  rescaled by that factor and the blocks in all of these partitions
  together would form a spanning hypertree in the sense of
  Remark~\ref{rem:hypertrees}.  The full variety of spanning trees
  which satisfy the edge rescaling definition are selected from the
  edges inside the blocks of such a canonical spanning hypertree.
\end{rem}

\begin{defn}[Edge rescaling maps]\label{def:edge-rescaling-maps}
  One thing to note is that for every spanning tree $T$ in the
  $1$-skeleton of a labeled euclidean simplex $\Delta$ and for every
  set of positive real scale factors for these edges, there does exist
  a rescaled simplex $\Delta'$ in which these edges are rescaled by
  these factors since the rescaled tree $T'$ formed by assembling the
  rescaled edges as before tells us how the vertices should be
  arranged.  In particular, the rescaling of $\Delta$ only depends on
  the set of merely rescaled edges and the scale factors used to
  rescale them.  Thus there is a well-defined \emph{edge rescaling
    map} $R$ from the space of labeled euclidean simplices to itself
  which, based only on such data, rescales each $\Delta$ in the set to
  a new simplex $\Delta'$.  Such a map $R$ is clearly invertible since
  rescaling the same edges by the multiplicative inverse of each scale
  factor returns each $\Delta'$ to $\Delta$.  The edge rescaling maps
  we are primarily interested in are those where every scale factor is
  $1$ or $q$.  We call these \emph{$q$-rescalings} and we introduce a
  special notation for them.  Let $R = R^\sigma_\tau$ denote the
  $q$-rescaling where the blocks of the partition $\sigma$ index the
  subsimplices rescaled by $q$ and the blocks of the partition $\tau$
  index the subsimplices which are fixed, i.e. rescaled by a factor of
  $1$.
\end{defn}

The next proposition shows that the effect of $R$ is encoded in a
matrix.

\begin{prop}[Edge rescaling matrices]\label{prop:edge-rescaling-matrices}
  The effect of an edge rescaling map $R$ on an edge norm vector $\vv$
  for a labeled euclidean simplex $\Delta$ is captured by an $N$ by
  $N$ matrix $M$ whose entries only depend on the map $R$ and not on
  the vector $\vv$ or the simplex $\Delta$.  In particular, $R(\vv) =
  M \cdot \vv$ for all $\vv$ and~$\Delta$.
\end{prop}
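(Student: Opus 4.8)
The plan is to show that the edge norm vector of the rescaled simplex $\Delta'$ depends linearly on the edge norm vector of $\Delta$, with coefficients determined entirely by the combinatorial data of $R$ (the set of merely rescaled edges and their scale factors). First I would fix a spanning tree $T$ in the $1$-skeleton of $\Delta$ consisting of edges that are merely rescaled by $R$, together with the scale factors $\lambda_e > 0$ for $e \in T$; by Definition~\ref{def:edge-rescaling} and Definition~\ref{def:edge-rescaling-maps} such a $T$ exists and determines $R$. Root $T$ at vertex $p_1$. Then every vertex $p_k$ of $\Delta$ is reached from $p_1$ by a unique path in $T$, and the position vector $v_{1k}$ is the sum of the tree-edge vectors along that path. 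Correspondingly, in $\Delta'$ the vector $v'_{1k}$ is the same sum but with each tree-edge vector $v_e$ replaced by $\lambda_e v_e$ (this is exactly what "rescaled" means, and the tree structure is what lets us assemble these local rescalings into a globally consistent simplex, as noted in Definition~\ref{def:edge-rescaling-maps}).

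The heart of the argument is then a bookkeeping computation. For an arbitrary edge $e_{ij}$ of $\Delta'$ we have $v'_{ij} = v'_{1j} - v'_{1i}$, so $a'_{ij} = \norm(v'_{ij}) = \langle v'_{1j} - v'_{1i},\, v'_{1j} - v'_{1i}\rangle$. Expanding, each term is an inner product $\langle \lambda_e v_e,\, \lambda_f v_f\rangle = \lambda_e \lambda_f \langle v_e, v_f\rangle$ where $e, f$ range over tree edges. By Proposition~\ref{prop:inner-prods}, each $\langle v_e, v_f\rangle$ is an explicit $\Z$-linear combination of the norms $a_{kl}$ of edges of $\Delta$ — and the indices $k,l$ occurring, together with the coefficients, depend only on which tree edges $e,f$ are and not on $\Delta$. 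Collecting terms, $a'_{ij}$ becomes a fixed linear combination $\sum_{\{k,l\}} M_{(ij),(kl)}\, a_{kl}$ whose coefficients $M_{(ij),(kl)}$ are polynomials in the $\lambda_e$ and hence functions of $R$ alone. Arranging these coefficients into an $N \times N$ matrix $M$ (rows and columns indexed by edges in the standard order of Definition~\ref{def:edges}) gives exactly $R(\vv) = M\cdot\vv$, with $M$ independent of $\vv$ and $\Delta$.

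There is one subtlety to address carefully, and I expect it to be the main obstacle: the edge rescaling map $R$ is defined on the space of simplices without reference to any particular spanning tree, whereas the computation above fixes a choice of $T$. One must check that the resulting matrix $M$ does not depend on the choice of $T$ (nor on the choice of root). This follows from the remarks already in the paper: by Remark~\ref{rem:trees}, any two admissible spanning trees are obtained by swapping edges within the blocks of the canonical spanning hypertree of maximal rescaled faces, and within such a block all edges carry the same scale factor, so the reconstructed positions $v'_{1k}$ — and therefore all the $a'_{ij}$ — are unchanged. Alternatively, and more cleanly, one can argue that since $R$ is a well-defined map on simplices (Definition~\ref{def:edge-rescaling-maps}) and the formula $a'_{ij} = \langle v'_{1j}-v'_{1i}, v'_{1j}-v'_{1i}\rangle$ expresses $R(\vv)$ purely in terms of $\vv$ via the tree computation, any two choices of $T$ must yield matrices $M, M'$ with $M\vv = M'\vv$ for \emph{all} valid edge norm vectors $\vv$; since these $\vv$ range over an open subset of $\R^N$ (the simplices being nondegenerate), we conclude $M = M'$.
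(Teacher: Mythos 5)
Your argument is essentially the same as the paper's: express each $v_{ij}$ as a path-sum of tree-edge vectors in a spanning tree $T$ of merely rescaled edges, replace each tree-edge vector by its rescaled version, expand the resulting norm as a linear combination of inner products of tree-edge vectors, and convert those inner products into old edge norms via Proposition~\ref{prop:inner-prods}. The one addition you make --- verifying that $M$ is independent of the choice of $T$ via the open-set argument --- is a reasonable well-definedness check that the paper implicitly takes for granted, but it does not change the route of the proof.
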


\begin{proof}
  Let $T$ be a spanning set of edges that are rescaled by $R$. For
  each edge $e_{ij}$ we can use paths in the spanning tree $T$ to find
  a linear combination of vectors associated with rescaled edges whose
  sum is $v_{ij}$.  The new vector $v'_{ij}$, by definition, is a
  similar sum where the vectors in the sum are rescaled according to
  the scale factors of $R$.  Thus the norm of $v'_{ij}$ can be
  expanded as a linear combination of inner products of vectors whose
  edges belong to the tree $T$ and the coefficients of this linear
  combination are independent of the original edge norms.  Next, by
  Proposition~\ref{prop:inner-prods}, the inner products can be
  rewritten as linear combinations of the original edge norms.
  Substituting these in produces a formula for each new edge norm
  $a'_{ij}$ as a linear combination of the old edge norms in $\vv$
  with coefficients that are independent of $\vv$.  The matrix $M$ is
  formed by collecting these coefficients.
\end{proof}

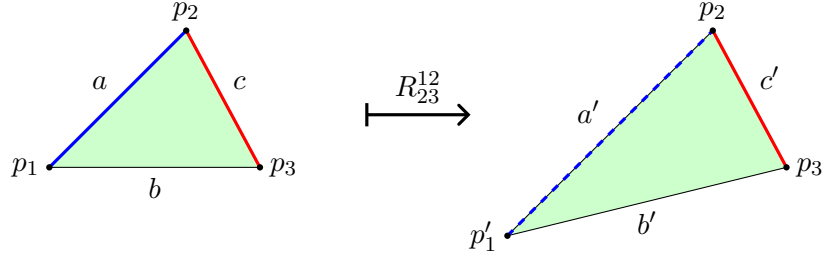
\begin{figure}
  \begin{tikzpicture}[join=round,scale=.7]
    \begin{scope}[xshift=-5cm]
      \coordinate (pa) at (0,0);
      \coordinate (pb) at (2.6,2.6);
      \coordinate (pc) at (4,0);
      \draw[GreenPoly] (pa)--(pb)--(pc)--cycle;
      \draw ($(pa)!.5!(pb)$) node [above left] {$a$};
      \draw ($(pa)!.5!(pc)$) node [below] {$b$};
      \draw ($(pb)!.5!(pc)$) node [above right] {$c$};
      \draw[BlueLine] (pa)--(pb);
      \draw[RedLine] (pb)--(pc);
      \fill (pa) circle (.6mm) node[anchor=east] {$p_1$};
      \fill (pb) circle (.6mm) node[anchor=south] {$p_2$};
      \fill (pc) circle (.6mm) node[anchor=west] {$p_3$};
    \end{scope}

    \begin{scope}[shift={(2cm,1cm)}]
      \draw[|-angle 90,very thick] (-1,0) -- node [above] {$R^{12}_{23}$} (1,0);
    \end{scope}

    \begin{scope}[xshift=5cm]
      \coordinate (pa) at (-1.3,-1.3);
      \coordinate (pb) at (2.6,2.6);
      \coordinate (pc) at (4,0);
      \draw[GreenPoly] (pa)--(pb)--(pc)--cycle;
      \draw ($(pa)!.5!(pb)$) node [above left] {$a'$};
      \draw ($(pa)!.5!(pc)$) node [below] {$b'$};
      \draw ($(pb)!.5!(pc)$) node [above right] {$c'$};
      \draw[BlueLine,dashed] (pa)--(pb);
      \draw[RedLine] (pb)--(pc);
      \fill (pa) circle (.6mm) node[anchor=east] {$p'_1$};
      \fill (pb) circle (.6mm) node[anchor=south] {$p_2$};
      \fill (pc) circle (.6mm) node[anchor=west] {$p_3$};
    \end{scope}
  \end{tikzpicture}
  \caption{An edge reshaping $R=R^{12}_{23}$ that rescales $e_{12}$ by
    a factor of $q$ and fixes $e_{23}$ (i.e. rescales $e_{23}$ by a
    factor of $1$).\label{fig:tri-rescale}}
\end{figure}

For simplicity we use the same symbol $R$ to denote both the edge
rescaling map and the edge rescaling matrix that was called $M$ in the
proposition.  As an explicit example of this process, consider the
$q$-rescaling of a triangle shown in Figure~\ref{fig:tri-rescale}.

\begin{prop}[Edge rescaling a triangle]\label{prop:triangle}  
  If $\Delta$ is a labeled euclidean triangle and $\Delta'$ is the
  labeled euclidean triangle obtained by the edge rescaling $R =
  R^{12}_{23}$, then the edge norms of $\Delta'$ can be computed from
  the edge norms of $\Delta$ as follows: $a_{12}' = q^2 a_{12}$,
  $a_{13}' = (q^2-q)a_{12} + q a_{13} + (1-q) a_{23}$, and $a_{23}' =
  a_{23}$.  In particular, $R$ has the effect of multiplying the edge
  norm vector of $\Delta$ by a matrix with entries in $\Z[q]$.
  \begin{equation}
    \vv' = 
    \begin{bmatrix} a'_{12} \\a'_{13}\\ a'_{23} \end{bmatrix}  
    =\begin{bmatrix}
    q^2 & 0 & 0\\
    q^2-q & q & 1-q\\
    0 & 0 & 1
    \end{bmatrix}
    \cdot   
    \begin{bmatrix} a_{12} \\a_{13}\\ a_{23} \end{bmatrix}  
    = R \cdot   
    \begin{bmatrix} a_{12} \\a_{13}\\ a_{23} \end{bmatrix}  
    = R \cdot \vv
  \end{equation}
\end{prop}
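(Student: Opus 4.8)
The plan is to specialize the general argument of Proposition~\ref{prop:edge-rescaling-matrices} to the three-vertex case, where the bookkeeping is short enough to display explicitly. First I would fix the spanning tree: the two edges that $R = R^{12}_{23}$ rescales, namely $e_{12}$ (scale factor $q$) and $e_{23}$ (scale factor $1$), already form a path through all three vertices $p_1,p_2,p_3$, hence a spanning tree $T$ of the complete graph on $\{1,2,3\}$. The only edge not in $T$ is $e_{13}$, so it is the only one whose new norm requires any work.

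The two outer rows of the matrix come essentially for free. Since an edge rescaling multiplies a vector by its scale factor, we have $v'_{12} = q\,v_{12}$ and $v'_{23} = v_{23}$, and because $\norm$ is quadratic in the vector this gives $a'_{12} = \norm(q\,v_{12}) = q^2 a_{12}$ and $a'_{23} = \norm(v_{23}) = a_{23}$. These are the first and third coordinates of $\vv'$.

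For the middle coordinate I would write $v_{13} = v_{12} + v_{23}$ (the vector from $p_1$ to $p_3$ is the sum of the two vectors along the path $T$), so after rescaling $v'_{13} = q\,v_{12} + v_{23}$. Expanding the norm, $a'_{13} = \norm(v'_{13}) = q^2\,\norm(v_{12}) + 2q\langle v_{12}, v_{23}\rangle + \norm(v_{23}) = q^2 a_{12} + 2q\langle v_{12}, v_{23}\rangle + a_{23}$. Then I would apply Proposition~\ref{prop:inner-prods} with the repeated index $j=k=2$ (using $a_{22}=0$, the norm of the zero vector) to get $2\langle v_{12}, v_{23}\rangle = a_{13} - a_{12} - a_{23}$. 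Substituting this in and collecting terms yields $a'_{13} = (q^2-q)a_{12} + q\,a_{13} + (1-q)a_{23}$. Reading the three formulas off as a matrix gives the displayed $3\times 3$ matrix, and every entry visibly lies in $\Z[q]$.

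There is no serious obstacle here; this proposition is really just an illustrative instance of Proposition~\ref{prop:edge-rescaling-matrices}. The only points that demand care are the conventions: that ``rescaling by $q$'' scales the \emph{vector} (hence the edge length) by $q$, so that the edge norm picks up a factor of $q^2$ rather than $q$, and that the vectors compose along the spanning path with consistent orientation, so that $v_{13} = v_{12} + v_{23}$ and not a difference. Once those are pinned down, the whole proof is the expansion of a single quadratic form together with one application of Proposition~\ref{prop:inner-prods}.
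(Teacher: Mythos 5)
Your proposal is correct and follows essentially the same approach as the paper: fix the rescaled edges $e_{12}$ and $e_{23}$ as a spanning tree, express $v_{13}=v_{12}+v_{23}$, and expand the quadratic form. The only cosmetic difference is that you invoke Proposition~\ref{prop:inner-prods} (with a repeated index) to obtain $2\langle v_{12},v_{23}\rangle = a_{13}-a_{12}-a_{23}$, while the paper rederives this identity inline by expanding $\norm(v_{12}+v_{23})$; both yield the same substitution.
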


\begin{proof} 
  For clarity let $a$, $b$ and $c$ be the edge norms $a_{12}$,
  $a_{13}$ and $a_{23}$ in $\Delta$ and add primes for the
  corresponding edge norms in $\Delta'$.  In the original triangle we
  have $a=\langle v_{12},v_{12} \rangle$, $c=\langle v_{23},v_{23}
  \rangle$ and $b = \langle v_{13}, v_{13} \rangle = \langle
  v_{12}+v_{23},v_{12}+v_{23} \rangle = a + 2 \langle
  v_{12},v_{23}\rangle +c$.  Thus $2 \langle v_{12},v_{23} \rangle =
  b-a-c$.  In the new triangle we have $c' = \langle v_{23},v_{23}
  \rangle = c$, $a' = \langle q\cdot v_{12}, q\cdot v_{12} \rangle =
  q^2 a$, and $b' = \langle q\cdot v_{12} + v_{23}, q \cdot v_{12} +
  v_{23}\rangle = q^2 a + 2q \langle v_{12}, v_{23}\rangle + c = q^2a
  + q(b-a-c) +c = (q^2-q)a + qb + (1-q)c$ as required.
\end{proof}

Many of the properties of the matrix $R = R^{12}_{23}$ extend to all
$q$-rescalings.

\begin{prop}[Quadratic matrices]
  Let $R = R^\sigma_\tau$ be a $q$-rescaling of a labeled euclidean
  simplex $\Delta$ with $n$ vertices.  The effect of $R$ on the edge
  norm vector $\vv$ of $\Delta$ is to multiply from the left by an $N$
  by $N$ matrix with entries in $\Z[q]$ of degree at most $2$.
\end{prop}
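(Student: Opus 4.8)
The plan is to run essentially the same argument as in the proof of Proposition~\ref{prop:edge-rescaling-matrices}, but now keeping careful track of which integers and which powers of $q$ appear, exploiting the fact that for a $q$-rescaling every scale factor equals $1$ or $q$.

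First I would fix a spanning tree $T$ of edges that are rescaled by $R = R^\sigma_\tau$. Such a $T$ exists by the discussion in Remark~\ref{rem:trees}: since the blocks of $\sigma$ and $\tau$ together form a spanning hypertree, one can assemble a spanning tree of the $1$-skeleton of $\Delta$ out of spanning trees of the individual blocks, so that every edge of $T$ lies inside a single block. Consequently each edge $e$ of $T$ carries a well-defined scale factor $\lambda_e \in \{1,q\}$, with $\lambda_e = q$ precisely when $e$ lies inside a block of $\sigma$.

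Next, for an arbitrary edge $e_{ij}$ of $\Delta$, let the unique path in $T$ from vertex $i$ to vertex $j$ traverse the tree edges $f_1,\ldots,f_r$, so that $v_{ij} = \sum_{s=1}^r \varepsilon_s v_{f_s}$ for suitable signs $\varepsilon_s \in \{\pm 1\}$. By the definition of the edge rescaling map the rescaled vector is $v'_{ij} = \sum_{s=1}^r \varepsilon_s \lambda_{f_s} v_{f_s}$, and therefore
\[
  a'_{ij} = \langle v'_{ij}, v'_{ij}\rangle
  = \sum_{s=1}^r \lambda_{f_s}^{\,2}\, a_{f_s}
  + \sum_{s<s'} \varepsilon_s \varepsilon_{s'}\, \lambda_{f_s} \lambda_{f_{s'}}\, \bigl(2\langle v_{f_s}, v_{f_{s'}}\rangle\bigr),
\]
where $a_{f_s} = \norm(v_{f_s})$ is the original norm of the tree edge $f_s$.

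Finally I would read off the coefficients. Each $\lambda_{f_s}^{2} \in \{1,q^2\}$ and each $\lambda_{f_s}\lambda_{f_{s'}} \in \{1,q,q^2\}$ is an element of $\Z[q]$ of degree at most $2$, while the signs $\varepsilon_s \varepsilon_{s'}$ are integers; meanwhile Proposition~\ref{prop:inner-prods} expresses each $2\langle v_{f_s}, v_{f_{s'}}\rangle$ as an integer linear combination of the original edge norms $a_{kl}$, and $a_{f_s}$ is itself one of those norms. Substituting, $a'_{ij}$ becomes a $\Z[q]$-linear combination of the entries of $\vv$ in which every coefficient has degree at most $2$; collecting these coefficients over all $\binom{n}{2}$ edges $e_{ij}$ gives the matrix $M$, whose entries therefore lie in $\Z[q]$ and have degree at most $2$. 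The only point demanding care is the first step: one must be sure the spanning tree $T$ can be chosen so that each of its edges sits inside a single block and hence has a scale factor that is a genuine element of $\{1,q\}$ rather than an ``effective'' factor that is itself a polynomial in $q$; the hypertree structure of $\sigma \cup \tau$ recorded in Remark~\ref{rem:trees} is exactly what guarantees this, and it is what keeps the degree bounded by $2$.
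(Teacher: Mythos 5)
Your argument is correct and follows the same route as the paper: run the proof of Proposition~\ref{prop:edge-rescaling-matrices} through a spanning tree of rescaled edges, then observe that since each tree edge carries a scale factor in $\{1,q\}$, every coefficient picks up at most one factor of $q$ from each side of the inner product and so has degree at most $2$. The paper compresses this into a one-sentence remark about "at most one $q$ coming from each side of the inner product," whereas you spell out the bookkeeping and the role of the spanning hypertree, but the underlying idea is identical.
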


\begin{proof}
  The proof is the same as that of
  Proposition~\ref{prop:edge-rescaling-matrices} but with the
  additional observation that the coefficients are at most quadratic
  polynomials in $q$ since there is at most one $q$ coming from each
  side of the inner product.
\end{proof}

\begin{figure}
  \begin{tikzpicture}[scale=.6]
    \begin{scope}[xshift=-7cm]
      \coordinate (1) at (0,0);
      \coordinate (2) at (3.5,5.46);
      \coordinate (3) at (7,1);
      \coordinate (4) at (5.4,-1.2);
      \draw[GreenPoly] (1)--(2)--(3)--(4)--cycle;
      \draw[RedPoly] (2)--(3)--(4)--cycle;
      \draw[BluePoly] (1)--(2) node [midway,above left,black] {$a$};
      \draw[dashed] (1)--(3) node [midway,above,black] {$b$};
      \draw[black] (1)--(4) node [midway,below,black] {$c$}; 
      \draw[RedPoly] (2)--(3) node [midway,above right,black] {$d$};
      \draw[RedPoly] (2)--(4) node [midway,right,black] {$e$};
      \draw[RedPoly] (3)--(4) node [midway,right,black] {$f$};
      \fill (1) circle (.6mm) node[anchor=east] {$p_1$};
      \fill (2) circle (.6mm) node[anchor=south] {$p_2$};
      \fill (3) circle (.6mm) node[anchor=west] {$p_3$};
      \fill (4) circle (.6mm) node[anchor=north] {$p_4$};
    \end{scope}

    \begin{scope}[shift={(3cm,1cm)}]
      \draw[|-angle 90,very thick] (-1,0) -- node [above] {$R^{12}_{234}$} (1,0);
    \end{scope}

    \begin{scope}[xshift=5cm]
      \coordinate (1) at (-.875,-1.365);
      \coordinate (2) at (3.5,5.46);
      \coordinate (3) at (7,1);
      \coordinate (4) at (5.4,-1.2);
      \draw[GreenPoly] (1)--(2)--(3)--(4)--cycle;
      \draw[RedPoly] (2)--(3)--(4)--cycle;
      \draw[dashed,BluePoly] (1)--(2) node [midway,above left,black] {$a'$};
      \draw[dashed] (1)--(3) node [midway,above,black] {$b'$};
      \draw[black] (1)--(4) node [midway,below,black] {$c'$}; 
      \draw[RedPoly] (2)--(3) node [midway,above right,black] {$d'$};
      \draw[RedPoly] (2)--(4) node [midway,right,black] {$e'$};
      \draw[RedPoly] (3)--(4) node [midway,right,black] {$f'$};
      \fill (1) circle (.6mm) node[anchor=east] {$p'_1$};
      \fill (2) circle (.6mm) node[anchor=south] {$p_2$};
      \fill (3) circle (.6mm) node[anchor=west] {$p_3$};
      \fill (4) circle (.6mm) node[anchor=north] {$p_4$};
    \end{scope}
  \end{tikzpicture}
  \caption{The edge rescaling $R^{12}_{234}$ which fixes the triangle
    $\Delta_{234}$ and rescales edge $e_{12}$ by a factor of
    $q$.\label{fig:tetra-rescale}}
\end{figure}
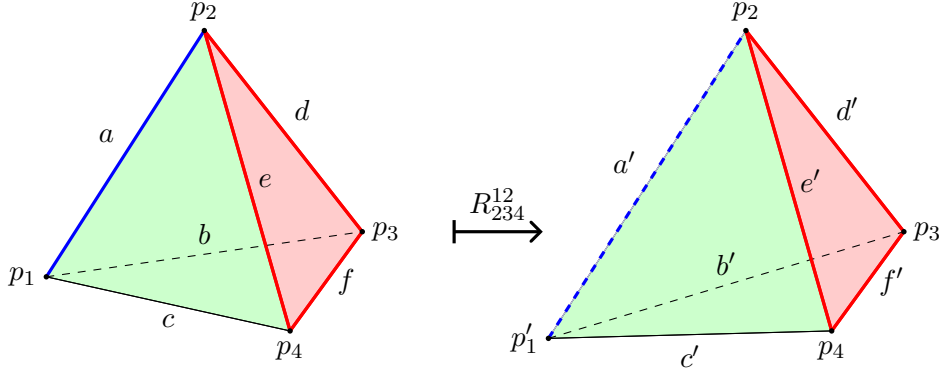

Our second example is very similar to the first.

\begin{exmp}[Rescaling a tetrahedron]\label{ex:tetra}
  Consider the edge rescaling $R^{12}_{234}$ as shown in
  Figure~\ref{fig:tetra-rescale} and note that we can compute all of
  the new edge norms using Proposition~\ref{prop:triangle}.  For
  clarity we write $a$ through $f$ for $a_{12}$ through $a_{34}$ in
  lexicographic order.  The effect of the map $R = R^{12}_{234}$ is as
  follows:
  \begin{equation}\label{eq:col}
    R \cdot 
    \begin{bmatrix} a\\ b\\ c\\ d\\ e\\ f \end{bmatrix} 
    = \begin{bmatrix} 
      q^2a \\ 
      (q^2-q)a + qb + (1-q)d \\ 
      (q^2-q)a + qc + (1-q)e \\ 
      d \\ 
      e \\ 
      f 
    \end{bmatrix}
    = \begin{bmatrix} a'\\ b'\\ c'\\ d'\\ e'\\ f' \end{bmatrix}
  \end{equation}
  Thus the matrix that encodes the rescaling $R$ is
  \begin{equation}
    R = \begin{bmatrix} 
      q^2 & 0 & 0 & 0 & 0 & 0 \\ 
      q^2-q & q & 0 & 1-q & 0 & 0 \\ 
      q^2-q & 0 & q & 0 & 1-q & 0 \\ 
      0 & 0 & 0 & 1 & 0 & 0 \\ 
      0 & 0 & 0 & 0 & 1 & 0 \\ 
      0 & 0 & 0 & 0 & 0 & 1 
    \end{bmatrix}
  \end{equation}
\end{exmp}

When the simplices are higher-dimensional, additional notation is
needed.

\begin{defn}[Row descriptions]
  Since the new edge norms are determined by the rows of the matrix $R
  = R^\sigma_\tau$, we introduce a way to describe these rows.  Recall
  that $e_{ij}$ denotes an edge in the punctured disc $\disc_n$ and an
  edge in a labeled euclidean simplex $\Delta$.  We now add a third
  interpretation: as an element of the canonical basis of the vector
  space $\R^N$ containing the edge norm vectors.  Using this
  interpretation of $e_{ij}$ as basis vectors, the second row of the
  matrix for $R=R^{12}_{23}$ as given in
  Proposition~\ref{prop:triangle} is the row vector $(q^2-q,q,1-q)$
  or, equivalently, it is the linear combination $(q^2-q) e_{12} + q
  e_{13} + (1-q) e_{23}$.  To select the second row one would multiply
  the matrix $R$ on the right by the row vector $(0,1,0)$ which is
  just the vector $e_{13}$.  In other words, $(e_{13}) R =(q^2-q)
  e_{12} + q e_{13} + (1-q) e_{23}$.
\end{defn}

\begin{rem}[Left and right]
  The linear combination that describes the image of a basis vector
  acted on by a rescaling matrix $R$ from the right looks very similar
  to the corresponding entry in the edge norm vector when acted on by
  $R$ from the left precisely because both are essentially encoding
  the entries in one row of $R$.  Nevertheless, this switch between
  left and right has the potential to be slightly confusing.
\end{rem}

It turns out that the linear combination that describes the $e_{kl}$
row of the matrix $R^{ij}_{\rc(ij)}$ (or $R^{ij}_{\lc(ij)}$) only
depends on the geometric relationship between the edges $e_{ij}$ and
$e_{kl}$ in the punctured disc $\disc_n$. Thus, in our row
descriptions of these matrices the final column uses the language of
Definition~\ref{def:edge-pairs} to describe how $e_{kl}$ is situated
relative to $e_{ij}$.

\begin{exmp}[Rescaling a boundary edge]\label{ex:boundary-edge}
  In this example we give explicit row descriptions for the
  $q$-rescalings which stretch a single boundary edge while fixing
  either its left or its right complement.  We start with the row
  description of the matrix $R = R^{12}_{\rc(12)}$.
  \begin{equation}
    (e_{kl})R = \left \{
    \begin{array}{cl}  
      q^2 e_{kl} & \textrm{identical ($k=1,l=2$)}\\
      e_{kl} & \textrm{noncrossing ($k,l >2$)}\\
      e_{kl} & \textrm{to the right ($k=2$)}\\
      (q^2-q) e_{12} + q e_{kl} + (1-q) e_{2l}  & \textrm{to the left ($k=1$)}\\
    \end{array}
    \right.
  \end{equation}
  This is a natural generalization of the trianglar and tetrahedral
  examples in the new notation.  The row description of $R =
  R^{ij}_{\rc(ij)}$ with $j=i+1 \mod n$ is only slightly more
  complicated.  We write $e_\third$ for the third edge of the triangle
  when $e_{ij}$ and $e_{kl}$ have exactly one endpoint in common.
  \begin{equation}
    (e_{kl})R = \left \{
    \begin{array}{cl}  
      q^2 e_{kl} & \textrm{identical}\\
      e_{kl} & \textrm{noncrossing}\\
      e_{kl} & \textrm{to the right}\\
      (q^2-q) e_{ij} + q e_{kl} + (1-q) e_\third  & \textrm{to the left}\\
    \end{array}
    \right.
  \end{equation}
  Switching from the right complement to the left complement causes
  only very minor changes.  The row description of the matrix $R =
  R^{12}_{\lc(12)}$ is as follows.
  \begin{equation}
    (e_{kl})R = \left \{
    \begin{array}{cl}  
      q^2 e_{kl} & \textrm{identical ($k=1,l=2$)}\\
      e_{kl} & \textrm{noncrossing ($k,l >2$)}\\
      e_{kl} & \textrm{to the left ($k=1$)}\\
      (q^2-q) e_{12} + q e_{kl} + (1-q) e_{1l}  & \textrm{to the right ($k=2$)}\\
    \end{array}
    \right.
  \end{equation}
  And finally, we list the row description of $R^{ij}_{\lc(ij)}$ with
  $j=i+1 \mod n$, with the same convention that $e_\third$ denotes the
  third edge of the triangle when $e_{ij}$ and $e_{kl}$ have exactly
  one endpoint in common.
  \begin{equation}
    (e_{kl})R = \left \{
    \begin{array}{cl}  
      q^2 e_{kl} & \textrm{identical}\\
      e_{kl} & \textrm{noncrossing}\\
      e_{kl} & \textrm{to the left}\\
      (q^2-q) e_{ij} + q e_{kl} + (1-q) e_\third  & \textrm{to the right}\\
    \end{array}
    \right.
  \end{equation}
\end{exmp}

\begin{figure}
  \begin{tikzpicture}[scale=.6]
    \begin{scope}[xshift=-7cm]
      \coordinate (1) at (0,0);
      \coordinate (2) at (3.5,5.46);
      \coordinate (3) at (7,1);
      \coordinate (4) at (5.4,-1.2);
      \draw[GreenPoly] (1)--(2)--(3)--(4)--cycle;
      \draw[black] (1)--(2) node [midway,above left,black] {$a$};
      \draw[black,dashed] (1)--(3) node [midway,above left,black] {$b$};
      \draw[RedLine] (1)--(4) node [midway,below,black] {$c$}; 
      \draw[RedLine] (2)--(3) node [midway,above right,black] {$d$};
      \draw[BlueLine] (2)--(4) node [midway,right,black] {$e$};
      \draw[black] (3)--(4) node [midway,right,black] {$f$};
      \fill (1) circle (.6mm) node[anchor=east] {$p_1$};
      \fill (2) circle (.6mm) node[anchor=south] {$p_2$};
      \fill (3) circle (.6mm) node[anchor=west] {$p_3$};
      \fill (4) circle (.6mm) node[anchor=north] {$p_4$};
    \end{scope}

    \begin{scope}[shift={(3cm,1cm)}]
      \draw[|-angle 90,very thick] (-1,0) -- node [above] {$R^{24}_{\rc(24)}$} (1,0);
    \end{scope}

    \begin{scope}[xshift=5cm]
      \coordinate (1) at (0,0);
      \coordinate (2) at (3.12,6.752);
      \coordinate (3) at (6.62,2.292);
      \coordinate (4) at (5.4,-1.2);
      \draw[GreenPoly] (1)--(2)--(3)--(4)--cycle;
      \draw[black] (1)--(2) node [midway,above left,black] {$a'$};
      \draw[black,dashed] (1)--(3) node [midway,above left,black] {$b'$};
      \draw[RedLine] (1)--(4) node [midway,below,black] {$c'$}; 
      \draw[RedLine] (2)--(3) node [midway,above right,black] {$d'$};
      \draw[black] (2)--(4);
      \draw[BlueLine,dashed] (2)--(4) node [midway,right,black] {$e'$};
      \draw[black] (3)--(4) node [midway,right,black] {$f'$};
      \fill (1) circle (.6mm) node[anchor=east] {$p_1$};
      \fill (2) circle (.6mm) node[anchor=south] {$p'_2$};
      \fill (3) circle (.6mm) node[anchor=west] {$p'_3$};
      \fill (4) circle (.6mm) node[anchor=north] {$p_4$};
    \end{scope}
  \end{tikzpicture}
  \caption{The edge rescaling $R^{24}_{\rc(24)}$ which rescales the
    edge $e_{24}$ while fixing the edges $e_{23}$ and
    $e_{14}$.\label{fig:diagonal-rescale}}
\end{figure}
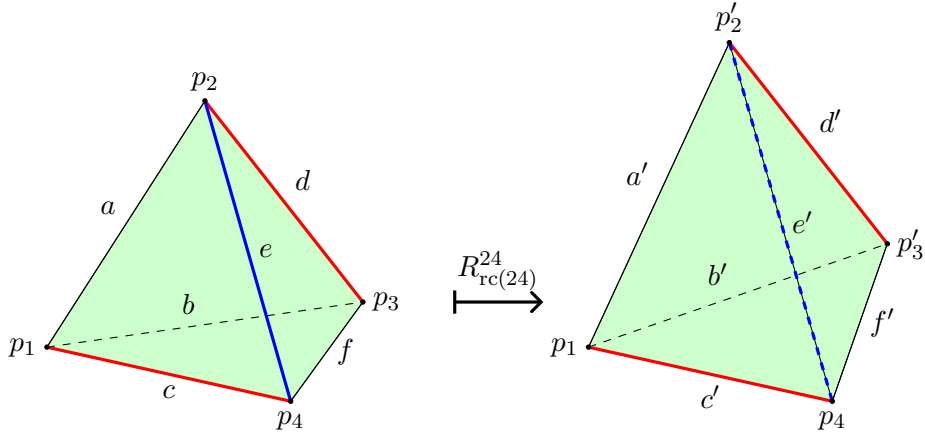

In order to extend Example~\ref{ex:boundary-edge} to more general
situations, we need one more computation.

\begin{prop}[Diagonal edges]\label{prop:diagonal-edge}
  Let $\Delta$ be a labeled euclidean tetrahedron.  If $\Delta'$
  is the labeled euclidean tetrahedron obtained by the edge rescaling
  $R = R^{24}_{\rc(24)}$, then the new edge norm $a'_{13}$ can be 
  computed from the edge norms of $\Delta$ as follows:
  \begin{equation*}
    a'_{13} = a_{13} + (q-1)^2 a_{24} + (q-1)(a_{14}+a_{23}) +(1-q)(a_{12} + a_{34}) 
  \end{equation*}
\end{prop}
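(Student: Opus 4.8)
The plan is to run the computation behind Proposition~\ref{prop:edge-rescaling-matrices} explicitly for the single new edge $e_{13}$, exactly as Proposition~\ref{prop:triangle} and Example~\ref{ex:tetra} do for boundary edges. First I would record what $R = R^{24}_{\rc(24)}$ actually does: by Definition~\ref{def:complements} the noncrossing partition $\rc(24)$ is $\{\{1,4\},\{2,3\}\}$, so $R$ stretches $e_{24}$ by the factor $q$ and fixes the edges $e_{14}$ and $e_{23}$, as in Figure~\ref{fig:diagonal-rescale}. These three edges form a spanning tree $T$ of the $1$-skeleton of $\Delta$, namely the path joining the vertices in the order $1, 4, 2, 3$. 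By the argument of Proposition~\ref{prop:edge-rescaling-matrices}, $v_{13}$ --- like every edge vector --- is a signed sum of tree vectors, and its rescaled image $v'_{13}$ is the same sum with each tree vector multiplied by its scale factor.

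Concretely, following the $T$-path from $p_1$ to $p_3$ gives $v_{13} = v_{14} + v_{42} + v_{23} = v_{14} - v_{24} + v_{23}$, hence $v'_{13} = v_{14} - q\,v_{24} + v_{23}$. Expanding $a'_{13} = \norm(v'_{13})$ by bilinearity yields
\[
 a'_{13} = a_{14} + q^{2}a_{24} + a_{23} - 2q\langle v_{14}, v_{24}\rangle + 2\langle v_{14}, v_{23}\rangle - 2q\langle v_{24}, v_{23}\rangle .
\]
Proposition~\ref{prop:inner-prods} converts the three inner products into edge norms: the two pairs of edges sharing a vertex give $2\langle v_{14}, v_{24}\rangle = a_{14} + a_{24} - a_{12}$ and $2\langle v_{24}, v_{23}\rangle = a_{23} + a_{24} - a_{34}$, and the opposite pair gives $2\langle v_{14}, v_{23}\rangle = a_{13} + a_{24} - a_{12} - a_{34}$. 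Substituting these and collecting the coefficient of each $a_{kl}$ produces a formula of the stated shape: $a_{13}$ survives with coefficient $1$; $a_{24}$ acquires $q^{2}$ together with two contributions of $-q$, giving $(q-1)^{2}$; and the two pairs $\{a_{14}, a_{23}\}$ and $\{a_{12}, a_{34}\}$ come out with coefficients $\pm(q-1)$.

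I expect the sign bookkeeping, rather than any conceptual point, to be the main obstacle, because the signs enter in three coupled ways: the orientation of the tree edge $e_{24}$ as the path runs from $p_1$ to $p_3$ (which is why $v_{42} = -v_{24}$, and hence the term $-q\,v_{24}$, appears), the order in which the endpoints of the opposite edges are fed into Proposition~\ref{prop:inner-prods}, and the choice of $\rc(24)$ over $\lc(24)$ --- replacing $\rc$ by $\lc = \{\{1,2\},\{3,4\}\}$ would instead fix $e_{12}, e_{34}$ and swap $(q-1) \leftrightarrow (1-q)$ on the two pairs. To pin all of this down I would evaluate both sides on one explicit tetrahedron whose two diagonals are not perpendicular --- so that $a_{14} + a_{23} \neq a_{12} + a_{34}$ and the two candidate sign patterns are genuinely distinguished --- for instance $p_1 = (0,0,0)$, $p_2 = (1,0,0)$, $p_3 = (1,1,0)$, $p_4 = (0,0,1)$, and read the final signs off that check.
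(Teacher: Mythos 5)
Your overall strategy is sound and is exactly the one the paper uses for the boundary-edge cases: decompose $v_{13}$ through the spanning tree of merely-rescaled edges, replace $v_{24}$ by $q\,v_{24}$, expand the norm, and convert the cross terms into edge norms via Proposition~\ref{prop:inner-prods}. Your identification of the tree is also the correct one given the paper's own definitions: since $\rc(24)=\{\{1,4\},\{2,3\}\}$ (and this is what the caption of Figure~\ref{fig:diagonal-rescale} says), the merely-rescaled edges of $R^{24}_{\rc(24)}$ are $e_{14},e_{24},e_{23}$, and the $T$-path $1\to4\to2\to3$ gives $v'_{13}=v_{14}-q\,v_{24}+v_{23}$. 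Your three conversions via Proposition~\ref{prop:inner-prods} are all correct.

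However, your proposal stops short of the proof: you leave the last two coefficients written as ``$\pm(q-1)$'' and defer the sign determination to a planned numerical check. Carry out that check and you will find that the calculation you have set up does \emph{not} produce the stated formula. Collecting coefficients from
\begin{equation*}
a'_{13}=a_{14}+q^{2}a_{24}+a_{23}-q(a_{14}+a_{24}-a_{12})+(a_{13}+a_{24}-a_{12}-a_{34})-q(a_{23}+a_{24}-a_{34})
\end{equation*}
gives
\begin{equation*}
a'_{13}=a_{13}+(q-1)^{2}a_{24}+(1-q)(a_{14}+a_{23})+(q-1)(a_{12}+a_{34}),
\end{equation*}
with the signs on the last two pairs \emph{reversed} from the Proposition. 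Your own test tetrahedron confirms this: with $p_1=(0,0,0)$, $p_2=(1,0,0)$, $p_3=(1,1,0)$, $p_4=(0,0,1)$ and $q=2$, fixing $e_{14},e_{23}$ and doubling $v_{24}$ yields $p'_1=(-1,0,1)$, $p'_3=(1,1,0)$, hence $a'_{13}=6$; your formula gives $6$, while the Proposition's gives $2$.

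The discrepancy is not a gap in your argument but an internal inconsistency in the paper. The paper's proof of this proposition writes $v_{13}=v_{12}+v_{24}+v_{43}$ and then passes to $v'_{13}=v_{12}+q\,v_{24}+v_{43}$, which tacitly keeps $v_{12}$ and $v_{43}$ fixed. But $e_{12}$ and $e_{34}$ are the blocks of $\lc(24)$, not $\rc(24)$; that decomposition is the spanning tree for $R^{24}_{\lc(24)}$. So the formula actually derived (and stated) is the one for the left-complement rescaling, not the right-complement rescaling named in the statement and in Figure~\ref{fig:diagonal-rescale}. Either the superscript/subscript in the Proposition should read $\lc(24)$, or the $(q-1)$ and $(1-q)$ on the last two pairs should be interchanged; the same sign issue is inherited by the row description in Example~\ref{ex:diagonal-edge}. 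In short: finish your computation and commit to the signs you get; they are correct for $R^{24}_{\rc(24)}$, and they expose a genuine typo-level error in the stated result rather than an error in your method.
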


\begin{proof}
  Since $v_{13} = v_{12} + v_{24} + v_{43}$, we have $v'_{13} = v_{12}
  + q v_{24} + v_{43}$.  Expanding $a'_{13} = \langle v'_{13}, v'_{13}
  \rangle$ we find that $a'_{13}$ is equal to $a_{12} + q^2 a_{24} +
  a_{34} + 2q \langle v_{12},v_{24} \rangle + 2q \langle v_{24},v_{43}
  \rangle + 2 \langle v_{12}, v_{34} \rangle$.  Using
  Proposition~\ref{prop:inner-prods} we find that $\langle
  v_{12},v_{24} \rangle = a_{14} - a_{12} - a_{24}$, $\langle
  v_{24},v_{43} \rangle = a_{23} - a_{24} - a_{34}$ and $\langle
  v_{12},v_{43} \rangle = a_{13} + a_{24} - a_{14} -
  a_{23}$. Substituting and simplifying yields the result.
\end{proof}

\begin{exmp}[Rescaling a diagonal edge]\label{ex:diagonal-edge}
  The row description for the rescaling matrix $R^{ij}_{\rc(ij)}$ is
  essentially identical to the one listed in
  Example~\ref{ex:boundary-edge}.  In particular, the formulas for the
  cases where $e_{kl}$ is identical to, noncrossing, to the left or to
  the right of $e_{ij}$ are the same as before.  The final geometric
  configuration that is possible when $e_{ij}$ is not a boundary edge
  is that $e_{ij}$ and $e_{kl}$ might be crossing.  When this happens
  $(e_{kl}) R^{ij}_{\rc(ij)}$ can be computed using
  Proposition~\ref{prop:diagonal-edge}.  If the clockwise ordering of
  $i$, $j$, $k$ and $l$ is $(k,i,l,j)$ then the answer is $e_{kl} +
  (q-1)^2 e_{ij} + (q-1) e_{kj} + (q-1) e_{il} + (1-q) e_{ki} + (1-q)
  e_{lj}$.  A more intrinsic geometric description would use the
  convex hull of $\{p_i,p_j,p_k,p_l\}$ in $\disc_n$.  The answer
  obtained is $e_{kl}$ plus $(q-1)^2 e_{ij}$ plus $(q-1)$ times the
  two boundary edges of the convex hull which are simultaneously to
  the right of $e_{ij}$ and to the left of $e_{kl}$ plus $(1-q)$ times
  the two boundary edges which are simultaneously to the right of
  $e_{ij}$ and to the left of $e_{kl}$.  A row description for
  $R^{ij}_{\lc(ij)}$ for arbitrary $i$ and $j$ can be computed in a
  similar fashion.
\end{exmp}

\section{Matrices}\label{sec:matrices}
In this section, we define three explicit representations of the braid
group and then we prove our main results.  The first representation,
and the most complicated, is the Lawrence-Krammer-Bigelow or LKB
representation.

\begin{defn}[LKB representation]\label{def:LKB-rep} 
  Let $q$ and $t$ be nonzero positive real numbers, let $\E$ be the
  set $\{e_{ij}\}$ with $1 \leq i < j \leq n$ of size $N =
  \binom{n}{2}$ and let $\R^N$ be the $N=\binom{n}{2}$-dimensional
  real vector space with $\E$ as its ordered basis.  The \emph{LKB
    representation} of the braid group is the map $\rho: \braid_n \to
  GL_N(\R)$ defined by the following action (from the right) of the
  standard braid group generators $s_{ij}$ (with $j=i+1$ and $1 \leq i
  <n$) on elements of $\E$.
  \begin{equation}
    (e_{kl})\rho(s_{ij}) = \left \{
    \begin{array}{cl}  
      tq^2 e_{kl} & i=k, j=l\\
      e_{kl} & i,j \notin \{k,l\} \\
      e_{jl} & i = k, j < l \\
      e_{kj} & i=l \\
      t(q^2-q)e_{ij} + q e_{ki} + (1-q) e_{kl}  & k < i, j = l \\
      (q^2-q) e_{ij} + q e_{il} + (1-q) e_{kl}  & j = k \\
    \end{array}
    \right.
  \end{equation}
\end{defn}

We make two remarks about this definition.

\begin{rem}[Left/right actions]\label{rem:left-right}
  In the literature, this action is written as an action from the left
  but our alteration only has the effect of transposing the relevant
  matrices.  Our choice is dictated by our desire to match up (a
  simplified version of) this representation with the obviously very
  similar edge rescaling matrices discussed in the previous section.
\end{rem}

\begin{rem}[The $t$ variable and its sign]\label{rem:t}
  The $t$ variable depends on the linear ordering of the vertices, it
  is associated the standard presentation of the braid group, and its
  presence obscures the fundamentally cyclically symmetric nature of
  the dependence on $q$.  To highlight this cyclic symmetry, we shall
  consider the specialization with $t=1$ below.  We should also note,
  however, that there are inconsistencies in the literature regarding
  the sign of $t$.  The variable $t$ in \cite{Kr00} corresponds to
  $-t$ in \cite{Bi01} (with an additional sign correction in
  \cite{Bi03}) and \cite{ItoWiest}.  We have written the LKB
  representation using Krammer's sign convention.  If we had followed
  Bigelow's we would be setting $t$ equal to $-1$.
\end{rem}

In light of our first main theorem, we call the simplified LKB
representation the \emph{simplicial representation} of the braid
group.

\begin{defn}[Simplicial representation] \label{def:simp-rep} 
  The \emph{simplicial representation} of the braid group is the
  specialization of the LKB representation with $t$ set equal to $1$.
  Concretely, let $q$ be a nonzero positive real number, let $\E$ be
  the set $\{e_{ij}\}$ with $1 \leq i < j \leq n$ in lexicographic
  order and let $\R^N$ be the $N=\binom{n}{2}$-dimensional real vector
  space with $\E$ as its ordered basis.  The \emph{simplicial
    representation} of the braid group is defined by the following
  action (from the right) of the standard braid group generators
  $s_{ij}$ (with $j=i+1$ and $1 \leq i <n$) on elements of $\E$.  We
  write $S_\sigma$ for the matrix that represents $s_\sigma$ with
  respect to the ordered basis $\E$ and we have introduced the
  notation $e_{\third}$ to denote the third side of the triangle when
  $e_{ij}$ and $e_{kl}$ have exactly one endpoint in common as in the
  previous section.
  \begin{equation}
    (e_{kl})S_{ij} = \left \{
    \begin{array}{cl}  
      q^2 e_{kl} &  i=k, j=l\\
      e_{kl} & i,j \notin \{k,l\}\\
      e_{\third} & i = k, j < l \\
      e_{\third} & i=l \\
      (q^2-q) e_{ij} + q e_{\third} + (1-q) e_{kl} & k < i, j = l \\
      (q^2-q) e_{ij} + q e_{\third} + (1-q) e_{kl} & j = k \\
    \end{array}
    \right.
  \end{equation}
  Now that the $t$ variable has been eliminated, some of the rows are
  identical and they can be rewritten more elegantly using the
  language of Definition~\ref{def:edge-pairs}.
  \begin{equation}
    (e_{kl})S_{ij} = \left \{
    \begin{array}{cl}  
      q^2 e_{kl} &  \textrm{identical}\\
      e_{kl} & \textrm{noncrossing}\\
      e_{\third} & \textrm{to the left} \\
      (q^2-q) e_{ij} + q e_{\third} + (1-q) e_{kl} & \textrm{to the right} \\
    \end{array}
    \right.
  \end{equation}
\end{defn}

Our third representation is obtained by also eliminating the $q$ variable.

\begin{defn}[Permutation representation]\label{def:perm-rep} 
  The \emph{permutation representation} of the braid group that we are
  interested in is the one obtained from the simplicial representation
  by setting $q=1$ (or both $t = q =1$ in the LKB representation).
  This encodes the permutation of the edges induced by the
  corresponding permutation of the vertices.  We write $P_\sigma$ for
  the matrix corresponding to $s_\sigma$.  Its row description is as
  follows.
  \begin{equation}
    (e_{kl})P_{ij} = \left \{
    \begin{array}{cl}  
      e_{kl} &  \textrm{identical, crossing or noncrossing}\\
      e_{\third} & \textrm{to the left or right} \\
    \end{array}
    \right.
  \end{equation}
\end{defn}

It should be clear that the simplicial representation matrix $S_{ij}$
is very closely connected, but not quite identical to the rescaling
matrix $R^{ij}_{\rc(ij)}$ given in Example~\ref{ex:boundary-edge}.
The difference is the permutation matrix $P_{ij}$.

\begin{exmp}[Geometry of $S_{12}$]\label{ex:s12}
  The matrices corresponding to the first standard generator $s_{12}$
  of the four string braid group in the simplicial representation and
  the permutation representation are as follows:
  \begin{equation}
    S_{12} = \begin{bmatrix} 
      q^2 & 0 & 0 & 0 & 0 & 0 \\ 
      0 & 0 & 0 & 1 & 0 & 0 \\ 
      0 & 0 & 0 & 0 & 1 & 0\\ 
      q^2-q & q & 0 & 1-q & 0 & 0\\ 
      q^2-q & 0 & q & 0 & 1-q & 0 \\ 
      0 & 0 & 0 & 0 & 0 & 1
    \end{bmatrix}
    \hspace{1em}
    P_{12} = \begin{bmatrix} 
      1 & 0 & 0 & 0 & 0 & 0 \\ 
      0 & 0 & 0 & 1 & 0 & 0\\ 
      0 & 0 & 0 & 0 & 1 & 0 \\ 
      0 & 1 & 0 & 0 & 0 & 0\\ 
      0 & 0 & 1 & 0 & 0 & 0 \\ 
      0 & 0 & 0 & 0 & 0 & 0 \\
    \end{bmatrix}
  \end{equation}
  It is now straightforward to check that $S_{12} = P_{12}
  R^{12}_{234} = R^{12}_{134} P_{12}$.  The matrix $R^{12}_{234} =
  R^{12}_{\rc(12)}$ is listed explicitly in Example~\ref{ex:tetra} and
  the row description of $R^{12}_{134} = R^{12}_{\lc(12)}$ is given in
  Example~\ref{ex:boundary-edge}.
\end{exmp}

Before proceeding to the proofs of our main results, we introduce one
final definition which makes our precise results easier to state.

\begin{defn}[Relabeling and rescaling]\label{def:relabel-rescale}
  Let $\sigma$ be a noncrossing permutation in $\sym_n$ and let
  $S_\sigma$ be the explicit matrix representing $s_\sigma$ under the
  simplicial representation.  We say that $S_\sigma$ \emph{relabels
    and rescales} if $S_\sigma = P_\sigma R^\sigma_{\rc(\sigma)} =
  R^\sigma_{\lc(\sigma)} P_\sigma$, where $\lc(\sigma)$ and
  $\rc(\sigma)$ are the left/right complements of $\sigma$.
\end{defn}

For clarity we also say what this definition means geometrically.  To
say that the matrix $S_\sigma$ relabels and rescales means that the
effect it has on labeled euclidean simplices (keeping in mind that we
multiply from right to left) is to first rescale the edges (fixing the
length and direction of the edges in the right complement of $\sigma$
while multiplying the lengths of the edges in $\sigma$ by a factor of
$q$) followed by the edge relabeling induced by the way $\sigma$
permutes the vertices.  Alternatively, the edge relabeling $P_\sigma$
can be performed first (i.e. on the right), in which case it is the
edges of the left complement of $\sigma$ whose length and direction
are fixed while the lengths of the edges in $\sigma$ are multiplied by
a factor of $q$.  In this language, Example~\ref{ex:s12} shows that
the matrix $S_{12}$ in the simplicial representation of $\braid_4$
relabels and rescales.  The next proposition shows that the standard
generators in all of the braid groups share this property.

\begin{prop}[Standard generators]\label{prop:std-gen}
  For every standard generator $s_{ij}$ of the braid group $\braid_n$,
  the corresponding matrix $S_{ij}$ in the simplicial representation
  relabels and rescales.
\end{prop}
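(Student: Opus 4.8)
The plan is to verify the matrix identity $S_{ij} = P_{ij} R^{ij}_{\rc(ij)} = R^{ij}_{\lc(ij)} P_{ij}$ directly, working row by row in the basis $\E$ and using the geometric classification of edge pairs from Definition~\ref{def:edge-pairs}. The key observation that makes this tractable is that all three matrices have been given explicit row descriptions: $S_{ij}$ in Definition~\ref{def:simp-rep}, $P_{ij}$ in Definition~\ref{def:perm-rep}, and $R^{ij}_{\rc(ij)}$ and $R^{ij}_{\lc(ij)}$ in Example~\ref{ex:boundary-edge} (for the case $j = i+1$, which is exactly the case of a standard generator). Since $e_{ij}$ here is always a boundary edge of $\disc_n$ (because $j = i+1 \bmod n$), no crossing configuration can occur, so only four cases arise: $e_{kl}$ is identical to $e_{ij}$, noncrossing with it, to its left, or to its right.

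First I would fix $i$ and $j = i+1$ and consider the right-hand product $P_{ij} R^{ij}_{\rc(ij)}$, applied to a basis vector $e_{kl}$ from the right. Since we are multiplying from the right, $(e_{kl})\bigl(P_{ij} R^{ij}_{\rc(ij)}\bigr) = \bigl((e_{kl})P_{ij}\bigr) R^{ij}_{\rc(ij)}$, so first apply the permutation matrix $P_{ij}$, which either fixes $e_{kl}$ or sends it to $e_{\third}$ according to the position of $e_{kl}$ relative to $e_{ij}$, and then feed the result into $R^{ij}_{\rc(ij)}$. I would run through the four cases. When $e_{kl}$ is identical to $e_{ij}$: $P_{ij}$ fixes it, then $R^{ij}_{\rc(ij)}$ multiplies by $q^2$, matching $(e_{kl})S_{ij} = q^2 e_{kl}$. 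When $e_{kl}$ is noncrossing with $e_{ij}$: $P_{ij}$ fixes it and so does $R^{ij}_{\rc(ij)}$, matching. When $e_{kl}$ is to the left of $e_{ij}$ (so $k = i$ or $l = i$ in the standard-generator setup, sharing exactly one vertex): $P_{ij}$ sends $e_{kl}$ to $e_{\third}$, and then one checks that $e_{\third}$ is to the right of $e_{ij}$, so $R^{ij}_{\rc(ij)}$ acts on it by the "to the right" rule — here the main bookkeeping is to confirm that the resulting linear combination $(q^2-q)e_{ij} + q e_{\third'} + (1-q) e_{kl}$ collapses correctly; in fact, since $e_{kl} \mapsto e_{\third}$ under $P_{ij}$ and the third edge of the triangle on $\{i,j\}\cup\{\text{the shared vertex}\}$... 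I would track the three edges of this triangle carefully, using that $P_{ij}$ permutes the two non-$e_{ij}$ edges, to see that the output equals $e_{\third}$, which is exactly $(e_{kl})S_{ij}$ in the "to the left" case. When $e_{kl}$ is to the right of $e_{ij}$: $P_{ij}$ fixes $e_{kl}$ (since the "to the left or right" clause of $P_{ij}$ swaps it with $e_{\third}$ — wait, I must check this direction), then $R^{ij}_{\rc(ij)}$ applies the "to the right" rule, and one verifies the resulting combination is $(q^2-q)e_{ij} + q e_{\third} + (1-q)e_{kl}$, matching $S_{ij}$.

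Then I would repeat the analysis for the left-hand product $R^{ij}_{\lc(ij)} P_{ij}$: here apply $R^{ij}_{\lc(ij)}$ first and $P_{ij}$ second, using the row description of $R^{ij}_{\lc(ij)}$ from Example~\ref{ex:boundary-edge} (which differs from $R^{ij}_{\rc(ij)}$ by swapping the roles of "left" and "right"). The four cases work analogously, and the $n=4$, $(i,j)=(1,2)$ instance is already worked out explicitly in Example~\ref{ex:s12}, which serves as a concrete check of the bookkeeping. The main obstacle — and the only real content beyond routine matrix multiplication — is getting the edge-labeling combinatorics exactly right: namely tracking, for each of the three edges of a triangle sharing one vertex with $e_{ij}$, which one is $e_{\third}$, which is to the left of $e_{ij}$, and which is to the right, and confirming that the permutation $P_{ij}$ interchanges the correct pair while $R^{ij}_{\rc(ij)}$ (resp. $R^{ij}_{\lc(ij)}$) acts nontrivially on exactly the edge on the correct side. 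Once this correspondence between the "left/right" language and the concrete vertex indices $k, l$ is pinned down (it is essentially the content of Definition~\ref{def:edge-pairs} combined with the fact that $s_{ij}$ transposes $i$ and $j$), every case reduces to a one-line verification, and since these four cases exhaust all possible configurations of $e_{kl}$ relative to a boundary edge $e_{ij}$, the identity $S_{ij} = P_{ij}R^{ij}_{\rc(ij)} = R^{ij}_{\lc(ij)}P_{ij}$ holds, which is precisely the assertion that $S_{ij}$ relabels and rescales.
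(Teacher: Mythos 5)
Your overall plan is the same as the paper's: compare the explicit row descriptions of $S_{ij}$, $P_{ij}$, and $R^{ij}_{\rc(ij)}$ (resp.\ $R^{ij}_{\lc(ij)}$) case by case, using the fact that left-multiplication by $P_{ij}$ permutes rows --- precisely swapping the ``to the left'' rows with the ``to the right'' rows --- while right-multiplication by $P_{ij}$ permutes columns. That is exactly the content of the paper's terse proof. However, your execution of the two nontrivial cases has the row descriptions of $R^{ij}_{\rc(ij)}$ swapped, which is exactly the kind of ``edge-labeling combinatorics'' you flagged as the only real obstacle and then did not resolve; as written, both of those cases would fail to check out.

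Concretely: in Example~\ref{ex:boundary-edge} the matrix $R^{ij}_{\rc(ij)}$ acts as the \emph{identity} on edges to the right of $e_{ij}$ (these lie in the fixed right complement), and produces the nontrivial combination $(q^2-q)e_{ij}+q\,e_{kl}+(1-q)e_{\third}$ only on edges $e_{kl}$ to the left of $e_{ij}$. In your ``to the left'' case you assert that, after $P_{ij}$ moves $e_{kl}$ to $e_{\third}$ (correct) and $e_{\third}$ is now to the right (correct), $R^{ij}_{\rc(ij)}$ produces a linear combination that must ``collapse''; in fact $R^{ij}_{\rc(ij)}$ simply fixes $e_{\third}$ and the case is immediate. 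In your ``to the right'' case you assert $P_{ij}$ fixes $e_{kl}$; this is false --- by Definition~\ref{def:perm-rep}, $P_{ij}$ sends \emph{every} $e_{kl}$ sharing exactly one vertex with $e_{ij}$, whether to the left or to the right, to $e_{\third}$. The correct chain is: $P_{ij}$ sends $e_{kl}$ (to the right) to $e_{\third}$ (now to the left), so $R^{ij}_{\rc(ij)}$ applies the ``to the left'' rule to $e_{\third}$, whose triangle's third edge is the original $e_{kl}$, giving $(q^2-q)e_{ij}+q\,e_{\third}+(1-q)e_{kl}$, which matches $S_{ij}$. Fixing these two case swaps (and mirroring for $R^{ij}_{\lc(ij)}P_{ij}$, where the identity row is the ``to the left'' one and column permutation replaces row permutation) closes the argument and reproduces the paper's proof.
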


\begin{proof}
  This is essentially immediate at this point once we compare the row
  description of $S_{ij}$ in Definition~\ref{def:simp-rep} with the
  row descriptions of $R^{ij}_{\rc(ij)}$ and $R^{ij}_{\lc(ij)}$ in
  Example~\ref{ex:boundary-edge} and note that multiplying by $P_{ij}$
  on the left switches the rows to the left of $e_{ij}$ with the rows
  to the right of $e_{ij}$ (which has the effect of switching which
  edge is denoted $e_{kl}$ and which is $e_{\third}$), while
  multiplying by $P_{ij}$ on the right permutes columns and thus the
  subscripts on the $e$'s that occur in the various terms of the row
  descriptions.
\end{proof}

From Proposition~\ref{prop:std-gen} we deduce our first result.

\renewcommand{\themain}{\ref{main:braid}}
\begin{main}[Braids reshape simplices]
  The simplicial representation of the $n$-string braid group
  preserves the set of $\binom{n}{2}$-tuples of positive reals that
  represent the squared edge lengths of a nondegenerate euclidean
  simplex with $n$ labeled vertices.
\end{main}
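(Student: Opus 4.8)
The plan is to reduce Theorem~\ref{main:braid} to Proposition~\ref{prop:std-gen} together with the geometric meaning of the relabeling and rescaling operations. First I would observe that the set in question --- call it $\mathcal{S} \subset \R^N_{>0}$, the set of edge norm vectors of nondegenerate euclidean simplices with $n$ labeled vertices --- is exactly the image of the space of labeled euclidean simplices under the edge norm vector construction of Definition~\ref{def:tuples}, since edge norm vectors characterize isometry classes of labeled simplices. Because the braid group is generated by the standard generators $s_{ij}$ with $j=i+1$ and their inverses, it suffices to show that each matrix $S_{ij}$ and each $S_{ij}^{-1}$ maps $\mathcal{S}$ into $\mathcal{S}$; a group generated by maps preserving a set preserves that set.

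Next I would invoke Proposition~\ref{prop:std-gen}, which gives $S_{ij} = P_{ij} R^{ij}_{\rc(ij)} = R^{ij}_{\lc(ij)} P_{ij}$. The key point is that both factors on the right-hand side send $\mathcal{S}$ bijectively to itself. For the permutation factor $P_{ij}$: relabeling the vertices of a nondegenerate simplex by the transposition $(i,j)$ produces another nondegenerate simplex, and its edge norm vector is exactly $P_{ij}$ applied to the original (this is the content of Definition~\ref{def:perm-rep}). For the rescaling factor $R^{ij}_{\rc(ij)}$: by Definition~\ref{def:edge-rescaling-maps} and Proposition~\ref{prop:edge-rescaling-matrices}, the edge rescaling map $R$ takes a labeled euclidean simplex $\Delta$ to a genuine labeled euclidean simplex $\Delta'$ in the same euclidean space (the rescaled spanning tree dictates how the vertices are positioned), and $\Delta'$ is again nondegenerate because the rescaled spanning-tree vectors still form a basis --- $q$ is a nonzero positive real, so multiplying basis vectors by $1$ or $q$ keeps them a basis. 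Hence $R^{ij}_{\rc(ij)}(\mathcal{S}) \subseteq \mathcal{S}$. Composing, $S_{ij}(\mathcal{S}) = P_{ij}(R^{ij}_{\rc(ij)}(\mathcal{S})) \subseteq \mathcal{S}$.

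Finally, for the inverses: $P_{ij}$ is an involution, and $R^{ij}_{\rc(ij)}$ is invertible as a reshaping of simplices (rescale the same edges by $1/q$ instead of $q$, as noted in Definition~\ref{def:edge-rescaling-maps}), with inverse again preserving $\mathcal{S}$ by the same argument applied to the scale factor $q^{-1} > 0$. Therefore $S_{ij}^{-1} = R^{ij}_{\rc(ij)}{}^{-1} P_{ij}$ also preserves $\mathcal{S}$, and the subgroup of $GL_N(\R)$ generated by all the $S_{ij}$ --- which is the image of the simplicial representation --- preserves $\mathcal{S}$. The main obstacle, and the one place requiring a little care, is verifying that the abstract edge rescaling matrix $R^{ij}_{\rc(ij)}$ of Proposition~\ref{prop:edge-rescaling-matrices} genuinely agrees with the honest geometric reshaping on all of $\mathcal{S}$ (not merely that it computes the right edge norms formally): this is where one must lean on the fact that the rescaled spanning tree actually reconstructs a well-defined simplex and that all $\binom{n}{2}$ new edge norms computed via Proposition~\ref{prop:inner-prods} are simultaneously realized by that simplex. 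Everything else is bookkeeping.
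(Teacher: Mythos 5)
Your proposal is correct and follows essentially the same route as the paper: reduce to the standard generators via Proposition~\ref{prop:std-gen}, then observe that both vertex relabelings $P_{ij}$ and edge rescalings $R^{ij}_{\rc(ij)}$ carry edge norm vectors of nondegenerate labeled simplices to edge norm vectors of nondegenerate labeled simplices. The paper's own proof is terser and leaves the treatment of inverses implicit, while you spell it out (via the involutivity of $P_{ij}$ and the $1/q$ rescaling), but this is only a matter of how much detail is written down rather than a genuinely different argument.
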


\begin{proof}
  First note that it suffices to prove that this holds for some
  generating set of $\braid_n$.  Next, both vertex relabelings and
  edge rescalings clearly preserve the set of $\binom{n}{2}$-tuples
  that describe the squared edge lengths of a nondegenerate euclidean
  simplex with $n$ labeled vertices, so Proposition~\ref{prop:std-gen}
  completes the proof.
\end{proof}

After we proved Theorem~\ref{main:braid}, we discovered that it had
already been established (in a different language) in the dissertation
of Arkadius Kalka \cite{Ka07}.  Since our proof is more geometric in
nature and seemingly simpler than his, we believe that our proof is of
independent interest.  Also, he does not appear to have established
anything resembling our Theorem~\ref{main:simple}.  Once we know that
some dual simple braids relabel and rescale, it is straightforward to
show that their products (at least those which remain dual simple
braids) do so as well.

\begin{prop}[Products]\label{prop:products}
  Let $\sigma_1$ and $\sigma_2$ be noncrossing permutations in
  $\sym_n$ such that $s_{\sigma_1}$, $s_{\sigma_2}$ and
  $s_{\sigma_1\sigma_2}$ are dual simple braids.  If both
  $S_{\sigma_1}$ and $S_{\sigma_2}$ relabel and rescale, then
  $S_{\sigma_1 \sigma_2}$ relabels and rescales.
\end{prop}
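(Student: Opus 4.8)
The plan is to chase the defining relation $S_\sigma = P_\sigma R^\sigma_{\rc(\sigma)} = R^\sigma_{\lc(\sigma)} P_\sigma$ through the product $s_{\sigma_1}s_{\sigma_2}$, using the fact (Definition~\ref{def:perm-rep}) that $\sigma \mapsto P_\sigma$ is a genuine (anti)homomorphism on the relevant portion of the braid group, so that $P_{\sigma_1\sigma_2} = P_{\sigma_1}P_{\sigma_2}$, together with the rescaling-map composition principle from Definition~\ref{def:edge-rescaling-maps} and Proposition~\ref{prop:edge-rescaling-matrices}. First I would write $S_{\sigma_1\sigma_2}$ as the matrix product $S_{\sigma_1}S_{\sigma_2}$ (being careful about the right action: in our convention $s_{\sigma_1}$ acts first, so the matrix product is in the order that respects $(e)S_{\sigma_1\sigma_2} = ((e)S_{\sigma_1})S_{\sigma_2}$), substitute $S_{\sigma_1} = R^{\sigma_1}_{\lc(\sigma_1)}P_{\sigma_1}$ and $S_{\sigma_2} = P_{\sigma_2}R^{\sigma_2}_{\rc(\sigma_2)}$, and examine the middle term $P_{\sigma_1}P_{\sigma_2} = P_{\sigma_1\sigma_2}$.

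**Next** I would deal with the two leftover rescaling factors. On the left we have $R^{\sigma_1}_{\lc(\sigma_1)}$; on the right, $R^{\sigma_2}_{\rc(\sigma_2)}$. The key observation is that conjugating a rescaling matrix by a permutation matrix yields another rescaling matrix whose rescaled subsimplices are the relabeled versions of the original ones — geometrically, relabeling the vertices and then rescaling a face is the same as rescaling the correspondingly-relabeled face and then relabeling. Concretely, $P_{\sigma_1\sigma_2}^{-1} R^{\sigma_1}_{\lc(\sigma_1)} P_{\sigma_1\sigma_2}$ is a $q$-rescaling whose $q$-blocks are the image of $\sigma_1$'s blocks under the permutation, and I would want to identify this (and the symmetric manipulation on the right) so that everything can be collected as $R^{\sigma_1\sigma_2}_{?} P_{\sigma_1\sigma_2}$ or $P_{\sigma_1\sigma_2} R^{\sigma_1\sigma_2}_{?}$. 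The combinatorial input here is precisely Definition~\ref{def:five-perms}: since $\sigma_1$, $\sigma_2$, and $\sigma_1\sigma_2$ are all noncrossing, we have $\delta = \sigma_1\sigma_4\sigma_2 = \sigma_5\sigma_1\sigma_2 = \sigma_1\sigma_2\sigma_3$ with $\sigma_5 = \lc(\sigma_1\sigma_2)$ and $\sigma_3 = \rc(\sigma_1\sigma_2)$, so the target complements are controlled.

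**The crux** is showing that the product $R^{\sigma_1}_{\lc(\sigma_1)}$ (suitably conjugated) composed with $R^{\sigma_2}_{\rc(\sigma_2)}$ (suitably conjugated) equals the single $q$-rescaling $R^{\sigma_1\sigma_2}_{\rc(\sigma_1\sigma_2)}$, and symmetrically for the left-complement form. Because edge rescaling maps are determined only by the set of rescaled edges and their scale factors (Definition~\ref{def:edge-rescaling-maps}), this reduces to a statement about which simplicial faces get multiplied by $q$ under the composite: one needs the $q$-rescaled faces of the composite to be exactly the blocks of $\sigma_1\sigma_2$, with scale factor $q$ (not $q^2$ or some mix). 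This is where the noncrossing geometry does the work — the blocks of $\sigma_1$ and the (relabeled) blocks of $\sigma_2$ interact through the "five permutations" picture so that their combined rescaling is again a clean $q$-rescaling by $\sigma_1\sigma_2$ rather than something with overlapping blocks. I expect this verification — tracking the scale factor on each face and confirming no face is rescaled by $q^2$ — to be the main obstacle; it likely proceeds by first handling the case where $\sigma_1$ and $\sigma_2$ have disjoint support (where the rescalings genuinely commute and the blocks simply combine), then the case where they share structure, appealing to the conjugation-by-$\sigma_4$ description in Definition~\ref{def:five-perms} to reduce to the disjoint or adjacent-block situation. Once the faces and factors match, Proposition~\ref{prop:edge-rescaling-matrices} guarantees the matrices agree, and we conclude $S_{\sigma_1\sigma_2} = P_{\sigma_1\sigma_2} R^{\sigma_1\sigma_2}_{\rc(\sigma_1\sigma_2)} = R^{\sigma_1\sigma_2}_{\lc(\sigma_1\sigma_2)} P_{\sigma_1\sigma_2}$, i.e. $S_{\sigma_1\sigma_2}$ relabels and rescales.
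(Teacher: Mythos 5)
You have the right ingredients --- decompose each $S_{\sigma_i}$, combine, and invoke Definition~\ref{def:five-perms} --- but you chose the decompositions the wrong way around, and that choice creates the difficulty you yourself flag as ``the crux'' and then leave unresolved. You substitute $S_{\sigma_1} = R^{\sigma_1}_{\lc(\sigma_1)}P_{\sigma_1}$ and $S_{\sigma_2} = P_{\sigma_2}R^{\sigma_2}_{\rc(\sigma_2)}$, which puts the two permutation matrices adjacent (easy to combine) and the two rescaling matrices on the outside (hard to combine, since they must first be dragged through $P_{\sigma_1\sigma_2}$). After conjugating, you are left to show that two rescalings whose rescaled and fixed blocks are the $\sigma_1\sigma_2$-images of $\sigma_1$, $\lc(\sigma_1)$, $\sigma_2$, $\rc(\sigma_2)$ multiply to $R^{\sigma_1\sigma_2}_{\rc(\sigma_1\sigma_2)}$, and $\lc(\sigma_1),\rc(\sigma_2)$ do not factor cleanly through $\sigma_3,\sigma_4,\sigma_5$. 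You acknowledge you only ``expect'' this verification to go through by a case split; that is the gap.

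The paper makes the opposite substitution, $S_{\sigma_1} = P_{\sigma_1}R^{\sigma_1}_{\rc(\sigma_1)}$ and $S_{\sigma_2} = R^{\sigma_2}_{\lc(\sigma_2)}P_{\sigma_2}$, so the two rescalings land \emph{adjacent} in the middle and the permutations face outward. The payoff is that, because $\delta = \sigma_1\sigma_4\sigma_2$, the right complement of $\sigma_1$ is $\sigma_4\sigma_2$ and the left complement of $\sigma_2$ is $\sigma_1\sigma_4$. Thus $R^{\sigma_1}_{\sigma_4\sigma_2}$ fixes every edge that $R^{\sigma_2}_{\sigma_1\sigma_4}$ $q$-scales, and conversely: each fixes the other's rescaled blocks. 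The composition therefore $q$-scales the $\sigma_1$ edges once, the $\sigma_2$ edges once, fixes $\sigma_4$, and there is no risk of a $q^2$ --- immediately giving $R^{\sigma_1}_{\sigma_4\sigma_2}R^{\sigma_2}_{\sigma_1\sigma_4}=R^{\sigma_1\sigma_2}_{\sigma_4}$. With the single rescaling now in hand, the remaining step is only the easy one of conjugating it past one permutation factor ($P_{\sigma_1}R^{\sigma_1\sigma_2}_{\sigma_4}P_{\sigma_2} = P_{\sigma_1}P_{\sigma_2}R^{\sigma_1\sigma_2}_{\sigma_3} = R^{\sigma_1\sigma_2}_{\sigma_5}P_{\sigma_1}P_{\sigma_2}$). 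In short: your plan needs a nontrivial lemma at its center that you have not proved, while the paper's choice of which side each $P_{\sigma_i}$ sits on renders that central step immediate. Flipping your two substitutions would close the gap.
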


\begin{proof}
  One consequence of the fact that $s_{\sigma_1}$, $s_{\sigma_2}$ and
  $s_{\sigma_1\sigma_2}$ are dual simple braids is that there are
  noncrossing permutations $\sigma_3$, $\sigma_4$ and $\sigma_5$ such
  that $\delta = \sigma_1 \sigma_2 \sigma_3 = \sigma_1 \sigma_4
  \sigma_2 = \sigma_5 \sigma_1 \sigma_2$
  (Definition~\ref{def:five-perms}).  For $S_{\sigma_1 \sigma_2} =
  S_{\sigma_1} S_{\sigma_2}$ we have the following equalities.
  \begin{eqnarray*}
    S_{\sigma_1} S_{\sigma_2} &=& (P_{\sigma_1} R^{\sigma_1}_{\sigma_4
      \sigma_2}) (R^{\sigma_2}_{\sigma_1\sigma_4} P_{\sigma_2})\\
    &=& P_{\sigma_1} R^{\sigma_1\sigma_2}_{\sigma_4} P_{\sigma_2}\\
    &=& P_{\sigma_1} P_{\sigma_2}  R^{\sigma_1\sigma_2}_{\sigma_3}\\
    &=& R^{\sigma_1\sigma_2}_{\sigma_5} P_{\sigma_1} P_{\sigma_2}
  \end{eqnarray*}
  The first line uses the hypotheses on $S_{\sigma_1}$ and
  $S_{\sigma_2}$.  The second combines the two edge scalings into a
  single rescaling.  In particular, the rescaling
  $R^{\sigma_1}_{\sigma_4 \sigma_2}$ rescales the edges in $\sigma_1$
  by $q$ and fixes the edges in $\sigma_4$ and $\sigma_2$, as well as
  the rest of the edges in the product $\sigma_4 \sigma_2$.
  Similarly, the rescaling $R^{\sigma_2}_{\sigma_1 \sigma_4}$ fixes
  the edges in in $\sigma_1$ and $\sigma_4$ and rescales those in
  $\sigma_2$ by a factor of $q$.  Thus, in the product of these two
  edge rescalings, the edges in $\sigma_1$ and $\sigma_2$ are rescaled
  by $q$ and those in $\sigma_4$ are fixed.  The third and fourth
  lines simply conjugate the points involved.  Since $P_{\sigma_1}
  P_{\sigma_2} = P_{\sigma_1 \sigma_2}$ and $\sigma_5$ and $\sigma_3$
  are the left/right complements of $\sigma_1 \sigma_2$, this
  completes the proof.
\end{proof}

Note that the same equalities used in the proof, slightly rearranged,
would show that if any two of $S_{\sigma_1}$, $S_{\sigma_2}$ and
$S_{\sigma_1\sigma_2}$ relabel and rescale then so does the third.
Propositions~\ref{prop:std-gen} and~\ref{prop:products} are not quite
enough to prove our second main result because not all dual simple
braids are products of standard generators.  We need to extend
Proposition~\ref{prop:std-gen} to the full set of dual generators.

\begin{prop}[Dual generators]\label{prop:dual-gen}
  For every dual generator $s_{ij} \in \gen_n$ of the braid group
  $\braid_n$, the corresponding matrix $S_{ij}$ in the simplicial
  representation relabels and rescales.
\end{prop}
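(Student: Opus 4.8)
The plan is to reduce the general case to Proposition~\ref{prop:std-gen} by an induction on the index gap $d = j - i$, feeding intermediate conclusions through Proposition~\ref{prop:products} and the remark that follows it. Recall that $\gen_n$ consists of exactly the rotations $s_{ij}$ with $1 \le i < j \le n$, so an induction on $d$ reaches every dual generator. When $d = 1$ we have $s_{ij} = s_{i,i+1}$, a standard generator, and Proposition~\ref{prop:std-gen} already gives the conclusion; this is the base case.

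For the inductive step, assume $d \ge 2$ and that $S_{i'j'}$ relabels and rescales for every dual generator with $j' - i' < d$. The key is to use the multiplication rule of Definition~\ref{def:mult} to write the rotation $\tau := s_{i,j-1,j}$ in two ways. Reading the boundary cycle of the triangle on $\{p_i, p_{j-1}, p_j\}$ clockwise from $p_i$ we meet $p_i$, then $p_{j-1}$, then $p_j$, so $\tau = s_{i,j-1}\,s_{j-1,j}$; reading the same cycle clockwise from $p_{j-1}$ we meet $p_{j-1}$, then $p_j$, then $p_i$, so (using $s_{j,i} = s_{ij}$ and that $(j-1,j,i)$ and $(i,j-1,j)$ are the same cyclic permutation) $\tau = s_{j-1,j}\,s_{ij}$. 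Note that $s_{i,j-1}$ is a dual generator of index gap $d - 1$, and that $s_{i,j-1}$, $s_{j-1,j}$, $s_{ij}$ and $\tau$ are all dual simple braids (a single edge or a single triangle always determines a noncrossing partition), so Propositions~\ref{prop:products} and~\ref{prop:std-gen} are applicable to both factorizations.

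I would then finish in two moves. First, apply Proposition~\ref{prop:products} to $\tau = s_{i,j-1}\,s_{j-1,j}$: its two factors relabel and rescale (by the inductive hypothesis and by Proposition~\ref{prop:std-gen}, respectively), so $S_\tau$ relabels and rescales. Second, apply the remark following Proposition~\ref{prop:products} to $\tau = s_{j-1,j}\,s_{ij}$: two of the three matrices $S_{j-1,j}$, $S_{ij}$, $S_\tau$ are now known to relabel and rescale, hence so is the third, namely $S_{ij}$. This closes the induction.

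The only delicate point is the bookkeeping around Definition~\ref{def:mult}: one must read the clockwise boundary cycles of the relevant triangles correctly and respect the right-to-left multiplication convention so that the two factorizations of $\tau$ really hold, and one must confirm at each step that the elements being multiplied (and their product) are genuine dual simple braids so that the earlier propositions apply. Once those are in place, the argument is a purely formal chaining of Propositions~\ref{prop:std-gen} and~\ref{prop:products}.
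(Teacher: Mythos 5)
Your argument is correct, and it takes a genuinely different route from the paper's. The paper proves Proposition~\ref{prop:dual-gen} by direct verification: it quotes the explicit LKB matrix for a general dual generator from Krammer's paper \cite{Kr00}, sets $t=1$, and checks row by row that the result agrees with $P_{ij}R^{ij}_{\rc(ij)}$, invoking Example~\ref{ex:diagonal-edge} for the new rows that appear when $e_{ij}$ and $e_{kl}$ cross. You instead argue by induction on the gap $j-i$: the base is Proposition~\ref{prop:std-gen}, you factor $s_{i,j-1,j}$ in two ways via Definition~\ref{def:mult}, apply Proposition~\ref{prop:products} to conclude that $S_{i,j-1,j}$ relabels and rescales, and then use the ``two out of three'' remark following Proposition~\ref{prop:products} on the factorization $s_{j-1,j}\,s_{ij}=s_{i,j-1,j}$ to pass the property to $s_{ij}$. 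The bookkeeping you flag as delicate does check out: with the clockwise labeling of Definition~\ref{def:discs} and right-to-left multiplication, both factorizations $s_{i,j-1,j}=s_{i,j-1}\,s_{j-1,j}=s_{j-1,j}\,s_{ij}$ hold, and every element appearing is a rotation (a $2$- or $3$-element block with singletons is noncrossing), so the hypotheses of Proposition~\ref{prop:products} are satisfied at each step. What your approach buys is self-containment -- it derives the dual-generator case entirely from the standard-generator case and the product machinery, with no appeal to Krammer's explicit matrix formulas; the paper's approach, by contrast, yields the concrete row description of every $S_{ij}$ as a byproduct, which your argument establishes only indirectly.
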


\begin{proof}
  An explicit description of the matrix for $s_{ij}$ under the LKB
  representation is given in Krammer's earlier paper \cite{Kr00}.  If
  we set $t=1$ in that description, we find that the matrix $S_{ij}$
  has the exact same description as it does when $j=i+1$ as given in
  Definition~\ref{def:simp-rep} except that a new case must be added
  that gives the result of $(e_{kl}) S_{ij}$ when $e_{ij}$ and
  $e_{kl}$ cross.  This simplification of the answer listed in
  \cite{Kr00} agrees with the corresponding row of $P_{ij}
  R^{ij}_{\rc(ij)}$, which is the same as the corresponding row of
  $R^{ij}_{\lc(ij)} P_{ij}$ obtained by combining
  Example~\ref{ex:diagonal-edge} and Definition~\ref{def:perm-rep}.
\end{proof}

Our second main result is an immediate corollary.

\renewcommand{\themain}{\ref{main:simple}}
\begin{main}[Dual simple braids relabel and rescale]
  Under the simplicial representation of the braid group, each dual
  simple braid relabels and rescales.  Concretely, for each $\sigma
  \in NC_n$, we have $S_\sigma = P_\sigma R^\sigma_{\rc(\sigma)} =
  R^\sigma_{\lc(\sigma)} P_\sigma$.
\end{main}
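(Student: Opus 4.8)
The plan is to deduce Theorem~\ref{main:simple} from the three propositions already established, using the lattice structure of $NC_n$ and the flexibility of factorizations of $\delta$. Every noncrossing permutation $\sigma$ factors as a product of rotations $s_B$ over the blocks $B$ of $\sigma$, and each such rotation is itself a product of dual generators $s_{ij}$ (one can write the $m$-cycle on a block as a product of $m-1$ transpositions whose intermediate products are all noncrossing — for instance, peeling off one boundary edge of the polygon $P_B$ at a time). Thus every dual simple braid $s_\sigma$ can be expressed as a product $s_{\sigma_1} s_{\sigma_2} \cdots s_{\sigma_k}$ where each $s_{\sigma_m}$ is a dual generator and, crucially, \emph{every partial product} $s_{\sigma_1}\cdots s_{\sigma_m}$ is again a dual simple braid. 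Granting such a factorization, the theorem follows by induction on $k$: the base case is Proposition~\ref{prop:dual-gen}, and the inductive step is Proposition~\ref{prop:products} applied to $\sigma' = \sigma_1\cdots\sigma_{m}$ and $\sigma'' = \sigma_{m+1}$ (both relabel and rescale, the first by induction and the second by Proposition~\ref{prop:dual-gen}, and their product $s_{\sigma'\sigma''}$ is a dual simple braid by the choice of factorization).

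First I would make precise the claim that every $\sigma \in NC_n$ admits a factorization into dual generators with all partial products noncrossing. The cleanest route is to recall that the Hasse diagram of $NC_n$ is exactly the interval $[1,\delta]$ in the Cayley graph of $\sym_n$ with respect to all transpositions (this is stated in the paragraph preceding Definition~\ref{def:five-perms}), and that $\sigma \le \delta$ in this order. A saturated chain from $1$ up to $\sigma$ in $NC_n$ is precisely a factorization $\sigma = t_1 t_2 \cdots t_k$ into transpositions (dual generators) in which each partial product $t_1 \cdots t_m$ is noncrossing, hence a dual simple braid. Such a chain exists because $NC_n$ is a graded lattice with bottom element $1$, so I simply take any maximal chain from $\hat 0$ to $\sigma$. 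This disposes of the combinatorial input with no computation.

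With the factorization in hand, the induction is routine. For the inductive step I set $\rho_m = \sigma_1 \sigma_2 \cdots \sigma_m$, observe $\rho_{m+1} = \rho_m \sigma_{m+1}$ with $s_{\rho_m}$, $s_{\sigma_{m+1}}$ and $s_{\rho_{m+1}}$ all dual simple, invoke the inductive hypothesis that $S_{\rho_m}$ relabels and rescales, invoke Proposition~\ref{prop:dual-gen} for $S_{\sigma_{m+1}}$, and conclude via Proposition~\ref{prop:products} that $S_{\rho_{m+1}} = S_{\rho_m} S_{\sigma_{m+1}}$ relabels and rescales. Taking $m = k-1$ gives that $S_\sigma$ relabels and rescales, i.e.\ $S_\sigma = P_\sigma R^\sigma_{\rc(\sigma)} = R^\sigma_{\lc(\sigma)} P_\sigma$, which is the assertion of the theorem.

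I expect the main obstacle to be purely expository rather than mathematical: one must be careful that the notion of ``dual simple braid'' used in Proposition~\ref{prop:products} matches the partial products arising from a saturated chain in $NC_n$, and that the identification of Hasse edges with transpositions respects the right-to-left multiplication convention adopted in Definition~\ref{def:mult}. Once those bookkeeping points are checked, there is nothing left to prove; in particular there is no new geometry and no new matrix computation beyond what Examples~\ref{ex:boundary-edge} and~\ref{ex:diagonal-edge} already supply. An alternative that avoids even the appeal to gradedness would be to induct directly on the number of blocks of $\sigma$ and, within a block, on its size, peeling boundary edges off the polygons $P_B$ one at a time using the multiplication rule $s_{Bi} s_{iC} = s_{BiC}$ of Definition~\ref{def:mult}; either route is short.
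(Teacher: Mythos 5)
Your proposal is correct and follows exactly the paper's route: establish the claim for the dual generators (Proposition~\ref{prop:dual-gen}), then extend to all dual simple braids by induction using the closure under products (Proposition~\ref{prop:products}). The only difference is that you spell out the ``simple induction'' the paper leaves implicit, namely that a maximal chain from $\hat 0$ to $\sigma$ in $NC_n$ supplies a factorization into dual generators whose partial products are all noncrossing, which is exactly the bookkeeping needed to feed Proposition~\ref{prop:products}.
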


\begin{proof}
  Proposition~\ref{prop:dual-gen} shows that the assertion is true for
  each dual generator.  Then Proposition~\ref{prop:products} and a
  simple induction extends this fact to all of the dual simple braids.
\end{proof}

\section{Final remarks}\label{sec:remarks}

In this final section we make a few remarks about the origins of these
results and some promising directions for futher investigation.  

To explain how we stumbled upon this point of view, we first need to
review the differences between the three papers by Krammer and Bigelow
establishing the linearity of the braid groups.  In his earlier
article Daan Krammer used a ping-pong type argument on the dual
Garside structure of the $4$~string braid group to establish that the
LKB representation representation is faithful for suitably generic
values of $q$ and $t$ \cite{Kr00}.  Next Stephen Bigelow replaced
Krammer's algebraic approach with a more topological one and succeeded
in showing that these representations are faithful for all $n$
\cite{Bi01}.  In his later article Krammer used an alternative version
of his original approach that also succeeded in establishing linearity
for every $n$ \cite{Kr02}.  The main difference between the two papers
by Krammer is that the first uses the dual Garside presentation of the
braid groups (indexed by the $q$ variable and closely related to the
Birman-Ko-Lee presentation introduced in \cite{BiKoLe98}) while the
second reverts to the standard presentation of the braid group
(indexed by the $t$ variable).

Shortly after these articles appeared, their results were extended to
prove linearity results for various other Artin groups, all based more
or less on the approach used in Krammer's second article \cite{Kr02}.
Fran\c{c}ois Digne extended linearity to the Artin groups of
crystallographic type \cite{Di03} and Arjeh Cohen and David Wales
proved linearity for all spherical Artin groups \cite{CoWa02}. Finally
Luis Paris generalized these results further by proving that the Artin
\emph{monoid} is linear for all Artin groups \cite{Pa02}.  In fact,
all of these proofs establish the linearity of the positive monoid.
When the Artin group is spherical, the Artin group is the group of
fractions of the positive monoid and thus linearity of the positive
monoid implies linearity for the group.  

For general Artin groups it is known that the positive monoid
generated by the standard minimal generating set is not large enough
for this implication to hold.  On the other hand, the positive monoid
generated by the larger dual generating set might have this property.
This led us to attempt to generalize Krammer's initial argument using
the dual generating set.  If one could find a proof of linearity for
all the braid groups focused on the $q$ variable as in \cite{Kr00},
then there is a chance that it might generalize as in the papers by
Digne, Cohen and Wales and Paris to eventually produce linearity
results for non-spherical Artin groups.  This was our initial
motivation.

We first wrote code in \texttt{sage} to investigate the properties of
the LKB matrices and found, experimentally, nice ways to decompose
them and we began to isolate the changes that each factor was making.
The interpretation of the simplified version as modifications of edge
norms of simplices was one of the final steps in our evolving
understanding of these representations.  

We conclude this article with three directions for additional
research.

\begin{prob}[Linearity]
  The simplified simplicial representation is known not to be faithful
  for large $n$ because it is also known as the symmetric tensor
  square of the Burau representation (which is known to not be
  faithful for $n\geq 5$ \cite{Bi99}).  Thus, establishing a new $q$
  based proof of braid group linearity requires the variable $t$ to
  remain generic.  One project is to find a way to extend the
  geometric interpretation given here so that the $t$ variable can
  remain generic and then to use this extended interpretation to give
  an alternative proof of braid group linearity focused on the $q$
  variable.  In particular, one should try to show directly that the
  dual positive monoid generated by full dual generating set acts
  faithfully using a ping-pong type argument similar to the one in the
  original Krammer article on $\braid_4$ \cite{Kr00}.
\end{prob}

\begin{prob}[Dual Garside length]
  The $q$~variable in the LKB representation has received very little
  attention since Krammer's earlier paper primarily because it was the
  $t$ variable which was the focus for the more general proof.
  Recently, however, Tetsuya Ito and Bert Wiest posted an article that
  proves one of the facts originally conjectured by Krammer in
  \cite{Kr00}, namely, that the highest power of $q$ in the LKB
  representation of a dual positive braid is twice its dual Garside
  length \cite{ItoWiest}.  It seems quite likely that one could give
  an alternative and elementary proof of their results using the
  geometric understanding of the $q$~variable introduced in this
  article.
\end{prob}

\begin{prob}[Spherical Artin groups]
  The set of labeled euclidean simplices, with dilated simplices
  identified, is one of the standard parameterizations of the higher
  rank symmetric space $SL(V)/SO(V)$ and the simplicial representation
  appears to act on this space by isometries.  Once this action is
  made explicit, it should be possible to define a similar
  construction and to give a similar interpretation for all of the
  spherical Artin groups once the focus on labeled euclidean simplices
  is replaced by linear transformations of root systems.
\end{prob}

\providecommand{\bysame}{\leavevmode\hbox to3em{\hrulefill}\thinspace}
\providecommand{\MR}{\relax\ifhmode\unskip\space\fi MR }
\providecommand{\MRhref}[2]{%
  \href{http://www.ams.org/mathscinet-getitem?mr=#1}{#2}
}
\providecommand{\href}[2]{#2}

\end{document}